\DeclareMathOperator{\Dom}{Dom}
\DeclareMathOperator{\re}{Re}
\tikzset{join/.code=\tikzset{after node path={%
			\ifx\tikzchainprevious\pgfutil@empty\else(\tikzchainprevious)%
			edge[every join]#1(\tikzchaincurrent)\fi}}}
\tikzset{>=stealth',every on chain/.append style={join},
	every join/.style={->}}
\tikzstyle{labeled}=[execute at begin node=$\scriptstyle,
\newtheorem{theorem}{Theorem}[section]
\newtheorem{definition}[theorem]{Definition}
\newtheorem{corollary}[theorem]{Corollary}
\newtheorem{proposition}[theorem]{Proposition}
\newtheorem{example}[theorem]{Example}
\newtheorem{remark}[theorem]{Remark}
\newcommand{\primos}{\textit{\textbf {p}}}
\newcommand{\T}{\mathbb{T}}
\newcommand{\D}{\mathbb{D}}
\newcommand{\N}{\mathbb{N}}
\newcommand{\C}{\mathbb{C}}
\newcommand{\Ha}{\mathcal{H}}
\newcommand{\Hd}{\mathcal{H}^2}
\DeclareFontFamily{U}{wncy}{}
\DeclareFontShape{U}{wncy}{m}{n}{<->wncyr10}{}
\DeclareSymbolFont{mcy}{U}{wncy}{m}{n}
\DeclareMathSymbol{\Sh}{\mathord}{mcy}{"58}
\let\norm\undefined % <-- "Undefine" \norm
\DeclarePairedDelimiter\norm{\lVert}{\rVert}
\begin{document}
\title[Splitting the Riesz basis condition for systems of dilated functions]{Splitting the Riesz basis condition for systems of dilated functions through Dirichlet series}

\author{Jorge Antezana}
\address{Departamento de Matematica, Universidad Nacional de La Plata,
and  IAM-
CONICET, Argentina}
\email{antezana@mate.unlp.edu.ar}
\thanks{ Supported by CONICET-PIP 152,  ANPCyT PICT 2015-1505 and National University of La Plata Grant 11X829 (UNLP)
.}

\author{Daniel Carando }
\address{Departamento de Matem\'atica, Facultad de Cs. Exactas y Naturales,
		Universidad de Buenos Aires and IMAS-UBA-CONICET, Int.~G\"uiraldes s/n, 1428, Buenos Aires, Argentina}
\email{dcarando@dm.uba.ar}
\thanks{ Supported by CONICET-PIP 11220130100329CO and  ANPCyT PICT 2018-04104
.}
\author{
Melisa Scotti}
\address{Departamento de Matem\'atica, Facultad de Cs. Exactas y Naturales,
	Universidad de Buenos Aires and IMAS-UBA-CONICET, Int.~G\"uiraldes s/n, 1428, Buenos Aires, Argentina} \email{mscotti@dm.uba.ar}
\thanks{ Supported by a CONICET doctoral fellowship,  CONICET-PIP 11220130100329CO and  ANPCyT PICT 2018-04104
.}

\begin{abstract}
	 Inspired by the work of Hedenmalm, Lindqvist and Seip, we consider different properties of dilations systems of a fixed function $\varphi \in L^2(0,1)$.  More precisely, we study when the system $\{\varphi(nx)\}_n$ is a Bessel sequence, a Riesz sequence, or it satisfies the lower frame bound. We are able to characterize these properties in terms of multipliers of the Hardy space $\Ha^2$ of Dirichtet series and, also, in terms of Hardy spaces on the infinite polytorus. We also address the multivariate case.
\end{abstract}

\maketitle
%\tableofcontents

\thispagestyle{empty}

\section{Introduction}

Take a function $\varphi \in L^2(0,1)$  and consider  it as a 2-periodic odd function defined  on $\mathbb R$. Hedenmalm, Lindqvist and Seip  in \cite{HeLiSe} studied the system of dilations $\varphi(x), \varphi(2x), \varphi(3x),\dots$ in order to characterize the functions $\varphi$ for which this system is a Riesz basis or is a complete system in  $L^2(0,1)$.  They solved the Riesz basis problem in terms of multipliers on the Hardy space $\Hd$ of Dirichlet series (see Theorem~\ref{teoseip} below). They also characterized those multipliers as Dirichlet series that  define analytic functions on the open right half plane which are bounded and bounded away from zero (see Theorem~\ref{Multipliers}). Those results, connecting  the Riesz basis (and completeness) problem for the system $\{\varphi(n\cdot):n\in \mathbb N \}$ to Dirichlet series and also to power series in infinitely many variables, follow some ideas of Bohr and Beurling (see \cite{BEU89,Bohr1913b,Bohr1913}). The fundamental work \cite{HeLiSe} marked a new starting point to important advances in different branches of the theory of Dirichlet series, infinite dimensional holomorphy, functional and Fourier analysis, many of which are documented in the recent monographs \cite{DirSer} and \cite{QueQue}.

In this work we consider different variants of the Riesz basis problem studied in \cite{HeLiSe}. Firstly, since the dilation system is a frame if and only if it is a Riesz basis,  we splits Hedenmalm, Lindqvist and Seip's result for Riesz bases into two assertions. In Theorem ~\ref{teo1} we characterize those $\varphi$ for which the corresponding system of dilations $\{\varphi(nx):n\in \mathbb N \}$ forms a Bessel sequence or satisfies a lower frame bound. The first part of the theorem was already proved in \cite{HeLiSe}. However, when the lower frame bound is studied alone, we loose the boundedness of the involved operators and, then, the original ideas do not work. As mentioned in Remark \ref{ciclicos}, the lower frame bound implies the ciclycity in $\Hd$ of the associated Dirichlet series.

Secondly, we decouple the Riesz basis condition in a different way. A Riesz basis is a complete Riesz sequence. The completeness of the system of dilations $\{\varphi(nx):n\in \mathbb N \}$ was studied in \cite{HeLiSe}, and later in  \cite{DanGuo21,NI12}. However, to the best of our knowledge, the systems of dilation that form a Riesz sequence, not necessarily complete, has not been studied. In Section~\ref{Srieszseq} we fill this gap, and we characterize  those $\varphi$ for which the dilation system is a Riesz sequence or an orthonormal sequence. We also give applications to shift invariant spaces. All these properties of the dilation system are characterized in terms of properties of related Dirichlet series and also of related functions in the infinite polytorus.

In Section  \ref{SExamples} we present different examples in which our results are applied. We show dilation systems satisfying just one of the frame bounds. We also give nontrival examples of dilation systems that are orthonormal sequences.
Finally, in Section \ref{SMultivariateCase} we extend our results to the multivariate case, making a natural connection between functions on $(0,1)^k$ and  multiple Dirichlet series.

The definitions and general results that are used throughout the article are mainly given in Section~\ref{preliminar}. For further information on Dirichlet series and their connection with infinite dimensional holomorphy or Fourier analysis in the infinite polytorus we refer to the monograph \cite{DirSer}.

\section{Preliminaries}\label{preliminar}

Given a function $\varphi \in L^2(0,1)$, consider it as a $2-$periodic odd function on $\mathbb{R}$ and define
$$\varphi_n(x) = \varphi(nx), $$
for all  $n  \in \mathbb{N}$.  Among the systems of dilations, the only that constitutes an orthogonal basis is the generated by the functions  $\varphi(x)=
C\sin(\pi x)$ for $C \neq 0$ (see, for example, \cite{BourMen45}). In particular, we know that the system $\{\sqrt{2} \sin(\pi nx)\}_n$ is an orthonormal basis for the space $L^{2}(0,1)$. This motivates the following identity
\begin{equation*}
\varphi (x) = \sum_{n=1}^{\infty} a_n \sqrt{2} \sin(\pi n x).
\end{equation*}

From now on, $\{e_n\}_n$ denote the orthonormal basis given by  $e_n (x):=\sqrt{2} \sin(\pi n x)$. With this notation, the above identity reads as
\begin{equation}\label{escritura phi}
\varphi(x)= \sum_{n=1}^\infty a_n e_n(x).
\end{equation}

Beurling's  idea was to associate $\varphi$ with a Dirichlet series. To see this, we need to introduce the space of all Dirichlet series whose coefficients are square summable. This space is known as
\begin{equation*}
\Ha^2 =  \left\{ \sum_n a_n n^{-s} : \| \sum_n a_n n^{-s} \|_{\Ha^2} = \left(\sum_{n} a_n^2 \right)^{1/2} < +\infty \right\},
\end{equation*}
and is a Hilbert space with de inner product given by $\langle \sum_n a_n ^{-s }, \sum b_n n^{-s} \rangle = \sum_n a_n \overline{b_n}$.

We now define some operators of key importance  throughout this work. Let
$S: L^2(0,1) \rightarrow \mathcal{H}^2$  be the operator which associates to each $f= \sum_n c_n e_n(x)$ in $ L^2 (0,1)$ the Dirichlet series
\[
Sf(s) =  \sum_{n=1}^\infty c_n n^{-s}.
\]
It is clear that the operator $S$ is an isometry between $L^2 (0,1)$ and $\mathcal{H}^2$ since it maps $\{e_n \}_n$ to $\{n^{-s}\}_n$ both orthonormal bases. Additionally, let $\mathcal{F}$ be the set of finite  linear combinations of ${e_n}$.  We can associate to each   $f= \sum_n c_n e_n \in \mathcal{F} $  the function
\[
T_{\varphi} f(x) = \sum_n c_n \varphi_n (x)= \sum_{n} \langle f, e_n \rangle \, \varphi_n (x).
\]
This defines an operator  $T_{\varphi}: \mathcal{F} \rightarrow L^ 2(0,1)$  which satisfies the following important identity
\begin{equation}\label{identidad}
S(T_{\varphi}f)(s) = S\varphi(s) Sf(s), \qquad  Re(s)> \frac{1}{2}.
\end{equation}
This means that the operator $T_{\varphi}$  corresponds to multiplying by $S\varphi$ in the Dirichlet series setting.

To see this, we start by computing $T_{\varphi}$ in basis $\{e_n\}$. First, note that the basis $\{e_n\}$, that has been fixed, is also given by dilations of the function  $\sqrt{2}\sin(\pi x)$ so we have that
\[
\varphi(nx) = \sum_k a_k e_k(nx) = \sum_k a_k e_{kn} (x).
\]
From now on let $f= \sum_n c_n e_n$, then
\begin{align}
T_{\varphi} \left(\sum_{n} c_n e_n \right) = \sum_n c_n \varphi_n = \sum_n c_n \left(\sum_m  a_m  e_{nm}\right).
\end{align}
Then from above follows that
\begin{equation}\label{multiD1}
T_{\varphi} (f) = \sum_n \left(\sum_{dm=n} c_d a_m \right) e_n.
\end{equation}
On the other hand, the space of all (formal) Dirichlet series is denoted by $\mathfrak{D}$. This space forms an algebra with the so-called `Dirichlet multiplication'  given by
\begin{equation}\label{multiD2}
S\varphi * Sf = \left( \sum a_n  n^{-s}\right) * \left( \sum c_n n^{-s} \right) = \sum_{n} \left( \sum_{dm= n} c_d a_m \right)n^{-s}.
\end{equation}
It is well known that the invertible elements are exactly those series $\sum a_n n^{-s}$ such that $a_1 \neq 0$.

Combining \eqref{multiD1} and \eqref{multiD2} we obtain the formal relation
\[
	S \circ T_{\varphi} = M_{S\varphi}\circ S,
\]
where $M_{S\varphi}$ is the multiplication operator from  $\mathfrak{D}$ to $ \mathfrak{D}$ given by $M_{S\varphi}(E)= S{\varphi}*E$.
Then, since all the Dirichlet series involved belong to $\mathcal{H}^2$ we deduce the pointwise identity \eqref{identidad} for $ Re(s) > 1/2$.

Also, a Dirichlet series $D$ is a multiplier for $\Ha^2$ if $D*E$ is in $\Ha^2$ for all $E \in \Ha^2$. Following the notation of \cite{HeLiSe} we write $\mathcal{M}$ for the set of all multipliers of $\Ha^2$. Each Dirichlet series defines a multiplication operator $M_D:  \mathfrak{D} \rightarrow \mathfrak{D}$. It is clear that $D$ belongs to $\mathcal{M}$ if and only if the multiplication operator $M_D$ defines a bounded operator from $\Ha^2$ to $\Ha^2$.

Let us recall the definition of Riesz basis.

\begin{definition}
	A system $\{f_n\}_n \subset H$ is a \textbf{Riesz basis} for $H$ if and only if
	\begin{itemize}
		\item[a)] every $f \in H$ can be written in the form $ f = \sum_n c_n f_n$;
		\item[b)] there exist constants $A$ and $B$,  $0< A \le B $, such that
		\begin{equation}\label{RieszBasis}
		A \left( \sum_n |c_n|^2 \right)^{\frac{1}{2}} \le \norm[\Big]{\sum_n c_n f_n} \le B \left( \sum_n |c_n|^2 \right)^{\frac{1}{2}}
		\end{equation}		
		for every finite sequence of scalars $c_n$.
	\end{itemize}
\end{definition}

The theorem of Hedenmalm, Lindqvist and Seip mentioned above states as follows.

\begin{theorem} \label{teoseip} \normalfont{\cite[Theorem 5.2]{HeLiSe}}
	The system $\{\varphi_n\}$ is a Riesz basis of $L^2(0,1)$ if and only if $S\varphi$ and $1/S\varphi$ belong to $\mathcal{M}.$
\end{theorem}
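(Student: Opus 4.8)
The plan is to recast the Riesz basis property as a single operator statement about $T_\varphi$, transport it through the isometry $S$ to the multiplication operator $M_{S\varphi}$ on $\Hd$, and then read off the two multiplier conditions.

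First I would observe that $T_\varphi$ sends the orthonormal basis $\{e_n\}$ to the dilation system, namely $T_\varphi e_n=\varphi_n$. Hence, applied to a finite combination $f=\sum_n c_n e_n$, the norm equivalence (b) in the definition of a Riesz basis reads
\[
A\,\norm{f}\le \norm{T_\varphi f}\le B\,\norm{f},
\]
because $\norm{f}=(\sum_n|c_n|^2)^{1/2}$. The upper estimate means $T_\varphi$ extends to a bounded operator on $L^2(0,1)$, while the lower estimate says it is bounded below, hence injective with closed range. Condition (a), that every $f$ can be written as $\sum_n c_n\varphi_n$, is precisely the statement that the range of $T_\varphi$ is dense; combined with closedness of the range it forces $T_\varphi$ to be onto. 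Thus $\{\varphi_n\}$ is a Riesz basis if and only if $T_\varphi$ extends to a bounded, boundedly invertible operator on $L^2(0,1)$, the reverse implication being routine from $\norm{f}\le\norm{T_\varphi^{-1}}\,\norm{T_\varphi f}$ together with surjectivity.

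Next I would invoke the intertwining $S\circ T_\varphi=M_{S\varphi}\circ S$ and the fact that $S$ is an isometric isomorphism of $L^2(0,1)$ onto $\Hd$. This makes $T_\varphi$ unitarily equivalent to $M_{S\varphi}$, so $T_\varphi$ is bounded and boundedly invertible exactly when $M_{S\varphi}$ is. By the definition of $\mathcal{M}$, boundedness of $M_{S\varphi}$ is equivalent to $S\varphi\in\mathcal{M}$, so the whole problem reduces to showing that $M_{S\varphi}$ is \emph{invertible} on $\Hd$ if and only if $1/S\varphi\in\mathcal{M}$.

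The heart of the matter, and the step I expect to be the main obstacle, is to identify the inverse of $M_{S\varphi}$ as a multiplication operator by the reciprocal series. Invertibility forces the first coefficient $a_1$ of $S\varphi$ to be nonzero (otherwise $1^{-s}$ could not lie in the range), so the formal inverse $1/S\varphi\in\mathfrak{D}$ is well defined. If both $S\varphi$ and $1/S\varphi$ are multipliers, then $M_{S\varphi}$ and $M_{1/S\varphi}$ are bounded and mutually inverse, since $S\varphi*(1/S\varphi)=1^{-s}$; hence $M_{S\varphi}$ is boundedly invertible. For the converse, set $W=M_{S\varphi}^{-1}$. As any two multiplication operators commute, $M_{S\varphi}$ commutes with every $M_{n^{-s}}$, and conjugating the identity $M_{S\varphi}M_{n^{-s}}=M_{n^{-s}}M_{S\varphi}$ by $W$ shows $W M_{n^{-s}}=M_{n^{-s}}W$ as well. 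Putting $D:=W(1^{-s})$, we get $W(n^{-s})=W(M_{n^{-s}}1^{-s})=M_{n^{-s}}D=D*n^{-s}$, so $W$ agrees with $M_D$ on the orthonormal basis $\{n^{-s}\}$ and therefore $W=M_D$ by boundedness and density; finally $M_{S\varphi}W=I$ gives $S\varphi*D=1^{-s}$, that is $D=1/S\varphi\in\mathcal{M}$. The delicate points to verify carefully are that the commutations hold at the level of bounded operators (not merely formally, using boundedness on the dense set of Dirichlet polynomials) and that operators coinciding on $\{n^{-s}\}$ coincide on all of $\Hd$.
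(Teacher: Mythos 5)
Your argument is correct, but note first that the paper does not actually prove this statement: Theorem~\ref{teoseip} is imported verbatim from Hedenmalm--Lindqvist--Seip, so there is no internal proof to compare against. What you wrote is nonetheless a sound, self-contained proof, and its skeleton --- translate the Riesz basis property into ``$T_\varphi$ extends to a bounded, boundedly invertible operator'', then conjugate by the isometry $S$ so that everything becomes a statement about $M_{S\varphi}$ on $\mathcal{H}^2$ --- is exactly the mechanism the paper uses to prove its own Theorems~\ref{teo1} and~\ref{teo2}. The one step where you take a genuinely different route is the converse of the final reduction. You identify $W=M_{S\varphi}^{-1}$ as a multiplication operator by showing it lies in the commutant of the shifts $M_{n^{-s}}$ and evaluating at $1^{-s}$; this works, and the two points you flag are indeed routine (the commutation relations extend from Dirichlet polynomials by boundedness and density, and agreement of $W$ with $E\mapsto D*E$ passes from the basis to all of $\mathcal{H}^2$ because each Dirichlet coefficient of $D*E$ depends on only finitely many coefficients of $E$ and hence varies continuously along $\mathcal{H}^2$-convergent sequences). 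The paper, in the proof of Theorem~\ref{teo1}, reaches the same conclusion more directly from surjectivity alone: given $E\in\mathcal{H}^2$, pick $F$ with $S\varphi * F=E$; invertibility forces $a_1\neq 0$, so the formal reciprocal exists and $(1/S\varphi)*E=(1/S\varphi)*S\varphi*F=F\in\mathcal{H}^2$ with $\|F\|\le\|M_{S\varphi}^{-1}\|\,\|E\|$. That shortcut avoids the commutant argument entirely, and it is the version that still makes sense when $M_{S\varphi}$ is only closed and densely defined rather than bounded, which is why the paper phrases its lower-frame-bound result that way. Either route is valid for the bounded setting of Theorem~\ref{teoseip}.
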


Furthermore, in \cite{HeLiSe} the authors give a characterization of the multipliers of $\mathcal{H}^2$ as a subspace of $\mathfrak{D}$. Let us first define the Banach algebra
\[
	\Ha^{\infty} =\Big\{ \sum a_n n^{-s} \text{ which defines a bounded holomorphic function on } \mathbb{C}_{0}  \Big\},
\]
where $\C_{\sigma}=\{  s \in \C : \operatorname{Re}(s)>\sigma \}.$

This space is endowed with the norm
\[
	 \norm[\Big]{\sum a_n n^{-s}}_{\Ha^{\infty}} = \sup\limits_{ \operatorname{Re} s >0}  \left| \sum a_n  n^{-s} \right|
\]
and identifies with the space of multipliers of $\mathcal{H}^2$, as the following theorem shows.
%The next theorem is what we called the characterization of the multipliers of $\mathcal{H}_2$ and is due to Hedenmalm, Lindqvist y Seip.

\begin{theorem}\label{Multipliers} \normalfont{\cite[Theorem 3.1]{HeLiSe}}
 Let $D$ a Dirichlet series in $\mathfrak{D}$. Then $D$ belongs to $\Ha^{\infty}$ if and only if $M_D: \Ha^2 \rightarrow \Ha^2$ is well-defined and continuous. 	
In this case we have
	
$$\norm{D}_{\mathcal{H}^{\infty}} = \norm {D}_ {\mathcal{M}} = \sup\limits_{\norm{E}_{\Ha^2} < 1} \norm{D * E}.$$
In other words, we have that $\mathcal{M} = \mathcal{H}^{\infty}$.

\end{theorem}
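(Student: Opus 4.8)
The plan is to establish the two inclusions separately, each paired with the reverse norm inequality, so that the equality $\norm{D}_{\Ha^\infty}=\norm{D}_{\mathcal M}$ falls out automatically. Concretely, I would show that every $D\in\Ha^\infty$ is a multiplier with $\norm{D}_{\mathcal M}\le\norm{D}_{\Ha^\infty}$, and conversely that every $D\in\mathcal M$ lies in $\Ha^\infty$ with $\norm{D}_{\Ha^\infty}\le\norm{D}_{\mathcal M}$; since $\norm{D}_{\mathcal M}$ is by definition $\norm{M_D}=\sup_{\norm{E}_{\Hd}<1}\norm{D*E}$, combining the two inequalities gives the statement.

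For $\Ha^\infty\subseteq\mathcal M$ I would use the classical mean-value identity: for any Dirichlet polynomial $P=\sum_{n\le N}\alpha_n n^{-s}$ one has $\norm{P}_{\Hd}^2=\lim_{T\to\infty}\frac1{2T}\int_{-T}^{T}|P(it)|^2\,dt$, the cross terms $\frac1{2T}\int_{-T}^{T}(m/n)^{it}\,dt$ vanishing in the limit when $m\neq n$. Given $D\in\Ha^\infty$ with $C=\norm{D}_{\Ha^\infty}$, I would approximate $D$ by its Bohr--Fej\'er means $D_k$, which are Dirichlet polynomials with $\norm{D_k}_{\Ha^\infty}\le C$ and $D_k\to D$ coefficientwise. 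For a fixed Dirichlet polynomial $E$ the convolution $D_k*E$ is again a polynomial and coincides with the pointwise product $D_k(s)E(s)$, so the identity above together with $|D_k(it)|\le C$ yields $\norm{D_k*E}_{\Hd}\le C\,\norm{E}_{\Hd}$. Letting $k\to\infty$ and using lower semicontinuity of the $\Hd$-norm under coefficientwise convergence gives $\norm{D*E}_{\Hd}\le C\,\norm{E}_{\Hd}$; density of polynomials in $\Hd$ then extends this to all $E$, so $D\in\mathcal M$ with $\norm{D}_{\mathcal M}\le C$.

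For the reverse inclusion, let $D\in\mathcal M$. Taking $E=1$ shows $D=M_D(1)\in\Hd$, hence $D$ converges and is holomorphic on $\C_{1/2}$. The reproducing kernel of $\Hd$ at $w\in\C_{1/2}$ is $k_w(s)=\zeta(s+\overline w)$, and a direct computation gives $M_D^*k_w=\overline{D(w)}\,k_w$; therefore $|D(w)|\le\norm{M_D^*}=\norm{D}_{\mathcal M}$ for every $w\in\C_{1/2}$. This bounds $D$ on $\C_{1/2}$, but the definition of $\Ha^\infty$ demands a bound on the strictly larger half-plane $\C_0$, and supplying the missing "horizontal" information is where the argument becomes delicate.

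To cross from $\C_{1/2}$ to $\C_0$ I would pass to the Bohr lift: writing $n=\prod_j p_j^{\alpha_j}$ and $z_j=p_j^{-s}$ identifies $\Hd$ isometrically with $\HdT$ and turns Dirichlet multiplication into ordinary multiplication of power series, with $\C_{1/2}$ corresponding to the $\ell^2$-points of the polydisc and $\C_0$ to all of $\D^\infty$. For each $N$ I would keep only the $\{p_1,\dots,p_N\}$-smooth frequencies, obtaining a function $D^{(N)}$ of $z_1,\dots,z_N$ whose multiplication operator on $H^2(\D^N)$ has norm at most $\norm{D}_{\mathcal M}$. The finite-dimensional fact that the multiplier algebra of $H^2(\D^N)$ is exactly $H^\infty(\D^N)$ with equal norms --- proved by testing $M_{D^{(N)}}^*$ on Szeg\H{o} kernels, which in finite dimensions see \emph{every} point of $\D^N$ --- gives $\norm{D^{(N)}}_{H^\infty(\D^N)}\le\norm{D}_{\mathcal M}$, so the partial Dirichlet series $D^{(N)}$ are bounded by $\norm{D}_{\mathcal M}$ on all of $\C_0$. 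Being uniformly bounded there they form a normal family, and since they converge to $D$ locally uniformly on $\C_{1/2}$ (where the series is absolutely convergent) their limit must be $D$ throughout $\C_0$ by the identity theorem; hence $|D|\le\norm{D}_{\mathcal M}$ on $\C_0$, i.e.\ $D\in\Ha^\infty$ with $\norm{D}_{\Ha^\infty}\le\norm{D}_{\mathcal M}$. I expect this final extension to be the main obstacle: the reproducing-kernel estimate controls $D$ only on $\C_{1/2}$, and it is precisely the finite-dimensional polydisc multiplier theorem, imported through the Bohr correspondence, that furnishes the bound up to the boundary half-plane $\C_0$.
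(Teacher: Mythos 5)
The paper itself offers no proof of this statement---it is quoted directly from Hedenmalm--Lindqvist--Seip as \cite[Theorem 3.1]{HeLiSe}---and your outline is a correct reconstruction of that original argument: Carlson's mean-value identity plus norm-contractive polynomial means for the inclusion $\Ha^{\infty}\subseteq\mathcal{M}$, and for the converse the reproducing-kernel bound on $\C_{1/2}$ followed by restriction to $\{p_1,\dots,p_N\}$-smooth frequencies, the finite-dimensional polydisc multiplier theorem, and a normal-families passage to $\C_0$. The two steps you leave implicit---the existence of the contractive polynomial means $D_k$ and the fact that truncation to $N$-smooth frequencies does not increase the multiplier norm (which follows because the orthogonal projection onto those frequencies intertwines $M_D$ with $M_{D^{(N)}}$)---are both standard and easily supplied, so there is no genuine gap.
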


\subsection{The spaces $ \mathcal{H}^{\infty}$, $H^{\infty}(\mathbb{T}^{\infty})$ and $H^{\infty}(B_{c_0})$ }

In this sections we describe    three spaces that have a central role in this work.
The idea of Bohr was to link the formal power series in infinitely many variables to Dirichlet series (also viewed as formal series) as follows. Let $\primos=\{ p_n \}_n$ be the ordered sequence of prime numbers. By the fundamental theorem of arithmetic each $n \in \N$ has a unique representation of the form $n= \primos^{\alpha} = p_1^{\alpha_1}\cdot \cdots \cdot p_m^{\alpha_m}$, this enables us to define
\begin{equation}\label{alphan}
\alpha (n) = (\alpha_1 , \ldots, \alpha_m, 0, \ldots) \in \N_0^{(\N)}.
\end{equation}
Bohr defined the bijection
\begin{align*}
\mathfrak{B}: \mathfrak{P} &\xrightarrow{\phantom{ a_n = a_{\primos^{\alpha}}=c_{\alpha}}} \mathfrak{D} \\
\sum_{\alpha} c_{\alpha}  z^{\alpha}&\xrightarrow{ a_n = a_{\primos^{\alpha}}=c_{\alpha}} \sum_{n=1}^{\infty} a_{n}n^{-s}.
\end{align*}
between the space $\mathfrak{P}$ of power series and the space $\mathfrak{D}$ of Dirichlet series. The function $\mathfrak{B}$ is called the Bohr transform. Furthermore, $\mathfrak{B}$ is linear and  multiplicative transform, i.e., it is an algebra isomorphism.

The linear space of all bounded and holomorphic functions $f$ from $B_{c_0}$ to $\C$ is denoted $H^{\infty}(B_{c_0})$ and endowed with the supremum norm.

The following theorem can be found in \cite[Theorem 3.8]{DirSer} and is crucial for what comes next.

\begin{theorem}\label{H inf DirSer} The following equality holds
	\[
	H^{\infty} (B_{c_0}) = \mathcal{H}^{\infty}.
	\]
	The Bohr transform is the unique isometric and linear bijection from $H^{\infty}(B_{c_0})$ into $\mathcal{H}^{\infty}$ wich assigns to each $f$ the Dirichlet series $\sum a_n n^{-s} = \mathfrak{B}f$, i.e. \break $a_n= c_{\alpha}(f)$ for all $n \in \N$ and $\alpha \in \N_0^{(\N)}$ with $n= \primos^{\alpha}$. Moreover, for each $s \in \C_{0}$
	\begin{equation}\label{TransfB}
	f\left( \frac{1}{p^{s}}\right) = \sum_{n=1}^{\infty} a_n n^{-s}.
	\end{equation}
	
\end{theorem}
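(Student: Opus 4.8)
The plan is to read the statement off the monomial expansion of functions in $H^{\infty}(B_{c_0})$. The underlying formal bijection $\mathfrak{B}\colon\mathfrak{P}\to\mathfrak{D}$ and its multiplicativity are already available, so the only real content is the isometric control of norms. Concretely, I would first establish the pointwise formula \eqref{TransfB}; this simultaneously shows that $\mathfrak{B}$ maps $H^{\infty}(B_{c_0})$ into $\Ha^{\infty}$ and yields the inequality $\norm{\mathfrak{B}f}_{\Ha^{\infty}}\le\norm{f}_{H^{\infty}(B_{c_0})}$. I would then prove the reverse inequality and surjectivity through a density argument based on Kronecker's theorem.

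Recall that every $f\in H^{\infty}(B_{c_0})$ has a monomial expansion $f(z)=\sum_{\alpha}c_{\alpha}(f)\,z^{\alpha}$, the coefficients being the Taylor--Fourier coefficients of the restrictions of $f$ to finite-dimensional polydiscs, and that they satisfy $\sum_{\alpha}|c_{\alpha}(f)|^{2}\le\norm{f}_{H^{\infty}(B_{c_0})}^{2}$. For $s\in\C_{0}$ set $\primos^{-s}:=(p_1^{-s},p_2^{-s},\dots)$; since $\re(s)>0$ we have $\sup_n|p_n^{-s}|=2^{-\re(s)}<1$ and $p_n^{-s}\to0$, so $\primos^{-s}\in B_{c_0}$ and $g(s):=f(\primos^{-s})$ is a bounded (by $\norm{f}_{H^{\infty}(B_{c_0})}$) holomorphic function on $\C_{0}$, being the composition of $f$ with the holomorphic $c_0$-valued map $s\mapsto\primos^{-s}$. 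For $\re(s)>1/2$ the point $\primos^{-s}$ lies in $\ell^{2}$, the monomial series converges absolutely there, and substituting $n=\primos^{\alpha}$, $a_n=c_{\alpha(n)}(f)$ gives
\[
g(s)=\sum_{\alpha}c_{\alpha}(f)\prod_{j}p_j^{-s\alpha_j}=\sum_{\alpha}c_{\alpha}(f)\,(\primos^{\alpha})^{-s}=\sum_{n=1}^{\infty}a_n\,n^{-s}.
\]
Thus $g$ is bounded on $\C_0$ and is represented by the Dirichlet series $\sum a_n n^{-s}$ on $\C_{1/2}$; by Bohr's theorem (the abscissa of uniform convergence of a Dirichlet series coincides with its abscissa of boundedness) this series converges uniformly on every $\C_{\varepsilon}$, so $\mathfrak{B}f=\sum a_n n^{-s}\in\Ha^{\infty}$, it represents $g$ on all of $\C_0$, and \eqref{TransfB} holds there. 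In particular $\norm{\mathfrak{B}f}_{\Ha^{\infty}}=\sup_{\re(s)>0}|f(\primos^{-s})|\le\norm{f}_{H^{\infty}(B_{c_0})}$.

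The crux is the reverse inequality, namely that the one-parameter families $\primos^{-s}$ already recover the full supremum of $f$. Here I would invoke Kronecker's theorem: as $\{\log p_n\}_n$ is rationally independent, the orbit $\{(p_1^{-it},p_2^{-it},\dots):t\in\mathbb{R}\}$ is dense in $\mathbb{T}^{\infty}$. Fixing $\sigma=\re(s)>0$ and letting $t$ vary, continuity of $g$ then gives $\sup_t|f(\primos^{-\sigma-it})|=\sup\{|f(z)|:z\in R_\sigma\}$, where $R_\sigma=\{(p_1^{-\sigma}\omega_1,p_2^{-\sigma}\omega_2,\dots):\omega\in\mathbb{T}^{\infty}\}\subset B_{c_0}$. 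As $\sigma\downarrow0$ the radii $p_n^{-\sigma}$ increase to $1$, and an application of the maximum modulus principle to the finite sections of $f$ (together with monomial approximation) shows $\sup_{R_\sigma}|f|\uparrow\norm{f}_{H^{\infty}(B_{c_0})}$. Taking the supremum over $\sigma$ yields $\norm{f}_{H^{\infty}(B_{c_0})}\le\norm{\mathfrak{B}f}_{\Ha^{\infty}}$, so $\mathfrak{B}$ is isometric.

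For surjectivity, given $D=\sum a_n n^{-s}\in\Ha^{\infty}$ I would set $c_{\alpha}:=a_{\primos^{\alpha}}$ and reconstruct $f$ from its truncations $f_N(z_1,\dots,z_N)=\sum_{\alpha\in\N_0^{N}}c_{\alpha}z^{\alpha}$, which correspond to restricting $D$ to the $\{p_1,\dots,p_N\}$-smooth integers. Since that restriction is a contractive projection on $\Ha^{\infty}$, each $f_N$ is bounded on $\D^{N}$ by $\norm{D}_{\Ha^{\infty}}$; the $f_N$ are coherent ($f_{N+1}(z_1,\dots,z_N,0)=f_N(z_1,\dots,z_N)$) and uniformly bounded, so they glue to an $f\in H^{\infty}(B_{c_0})$ with $\mathfrak{B}f=D$. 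Uniqueness is then immediate: the rule $a_n=c_{\alpha}(f)$ determines $\mathfrak{B}f$ from the uniquely determined monomial coefficients of $f$, and by \eqref{TransfB} any such isometric linear bijection must coincide with $f\mapsto\bigl(s\mapsto f(\primos^{-s})\bigr)$. The main obstacle is the reverse norm inequality together with the uniform boundedness of the truncations: the formal Bohr transform supplies the algebraic bijection for free, but upgrading it to an isometry genuinely requires translating the Kronecker density of the diagonal lift $s\mapsto\primos^{-s}$ into a maximum-modulus statement on the infinite-dimensional ball $B_{c_0}$.
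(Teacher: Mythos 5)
The paper does not prove this statement: it is imported verbatim from \cite[Theorem 3.8]{DirSer} (and goes back to Hedenmalm--Lindqvist--Seip), so there is no in-paper proof to compare against. Your blind reconstruction is, in outline, exactly the standard argument from those sources: the substitution $s\mapsto \primos^{-s}$ lands in $B_{c_0}$ for $\re s>0$, the monomial series converges absolutely at the $\ell^2$-points $\primos^{-s}$ with $\re s>\tfrac12$ via Cauchy--Schwarz and the coefficient bound $\sum_\alpha|c_\alpha(f)|^2\le\norm{f}^2$, Bohr's theorem ($\sigma_u=\sigma_b$) upgrades this to membership in $\mathcal{H}^{\infty}$ together with \eqref{TransfB} on all of $\C_0$, and Kronecker density of $\{(p_1^{-it},p_2^{-it},\dots)\}_t$ in $\mathbb{T}^{\infty}$ plus the maximum principle on finite polydiscs gives the reverse norm inequality. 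All of this is sound; note only that the continuity you need for the Kronecker step is continuity of $\omega\mapsto f(p_1^{-\sigma}\omega_1,p_2^{-\sigma}\omega_2,\dots)$ for the \emph{product} topology on $\mathbb{T}^{\infty}$, which holds because $p_n^{-\sigma}\to 0$ forces the map $\omega\mapsto(p_n^{-\sigma}\omega_n)_n$ into $c_0$ to be product-to-norm continuous.

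The one place where your sketch asserts rather than proves something substantive is the surjectivity step: the claim that the restriction of $D\in\mathcal{H}^{\infty}$ to the $\{p_1,\dots,p_N\}$-smooth integers is a contraction on $\mathcal{H}^{\infty}$ is precisely the hard content there, and it cannot be obtained by ``integrating out the remaining torus variables'' without circularity, since the polytorus realization of $D$ is what is being constructed. The standard route is to first show that every vertical limit $D^{\gamma}$ lies in $\mathcal{H}^{\infty}$ with the same norm (again via Kronecker/normal families) and then average $D^{\gamma}$ over the characters fixing $p_1,\dots,p_N$; alternatively one runs the iterated one-variable argument of Hedenmalm--Lindqvist--Seip. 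With that lemma supplied, your coherence-and-gluing construction of $f$ from the uniformly bounded sections $f_N$ (using the equi-Lipschitz estimate $\norm{df_N(z)}\le \norm{D}_{\mathcal{H}^\infty}/(1-\norm{z})$ and density of finitely supported points in $B_{c_0}$) closes the proof. So: correct strategy, matching the cited source, with one genuinely nontrivial sub-lemma left unproved.
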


It is important to highlight that this theorem implies the following remark.

\begin{remark}\label{unicidadC00}
	Every $f$ in $H^{\infty}(B_{c_0})$ has a unique monomial series representation of the form
	$f(z) \sim \sum_{\alpha} c_{\alpha} z^{\alpha} $ which coincides with $f$ on each $z\in B_{c_0}$ with only finitely many nonzero elements. The diagram \ref{diag} above illustrates this idea.
\end{remark}

In addition to these, they also introduced the Fourier analysis point of view.  Before we proceed let us recall that for a function $ f \in L^1(\mathbb{T}^{\infty})$ and $\alpha \in \mathbb{Z}^{(\mathbb{N})}$ , the  Fourier $\alpha-$coefficient is defined as
\[
\widehat{f}(\alpha) = \int\limits_{\mathbb{T^{\infty}}} f(w) w^{-\alpha} dw.
\]

Now we can define the Hardy space
\begin{equation*}
H^{\infty}(\mathbb{T^{\infty}}) = \{ f \in L^{\infty}(\mathbb{T^{\infty}}): \widehat{f}(\alpha) = 0 \textit{ if } \alpha_j \not\geq 0 \textit{ for every } j=1, 2, \ldots \}.
\end{equation*}

It is known that a function $f \in L^1(\mathbb{T^{\infty}})$ is uniquely determined by its  Fourier coefficients (see, for example,   \cite[Theorem 5.17]{DirSer}). It is also proved there  that the following three settings (or `worlds') are basically the same (see  \cite[Corollary 5.3]{DirSer}).
\begin{diagram}
\begin{center}
	\begin{tikzpicture}
	\matrix (m) [matrix of math nodes, row sep=0.1em, column sep=5.5em]
	{ \,  & \mathfrak{P}  & \mathfrak{P}  & \mathfrak{D} & \, \\
		\,  & \cup  & \cup & \cup & \, \\
		\,~ & H^{\infty}(\mathbb{T^{\infty}})  & H^{\infty}(B_{c_0}) & \mathcal{H}^{\infty} &  \\ };
	\end{tikzpicture}
	\begin{tikzpicture}
	
	\matrix (m) [matrix of math nodes, row sep=0.5em, column sep=5em]
	{ \, & \sum_{\alpha} \hat{f}(\alpha) w^{\alpha} &  \sum_{\alpha} c_{\alpha}  z^{\alpha} & \sum_{n} a_n n^{-s} &\, \\ };
	{ [start chain]
		\chainin (m-1-2);
		\chainin (m-1-3) [join={node[above,labeled] {c_{\alpha}=\hat{f}(\alpha)}}];
		\chainin (m-1-4) [join={node[above,labeled] {\qquad a_{n}=a_{\primos^{\alpha}}=c_{\alpha} \qquad}}];
	}
	\end{tikzpicture}
\end{center}
\caption{Relationship between the different $H^\infty$ spaces.}
\label{diag}
\end{diagram}

This is a crucial equality because it allows us transcribe our questions  from one place to another in searching for solutions. More precisely, \cite[Corollary 5.3]{DirSer} reads as the first assertion of the following corollary. The second assertion is much simpler since the Bohr transform maps the orthonormal basis $\{z^{\alpha}\}_{{\alpha} \in {\N}^{(\N)}_0}$ for $H^2 (\T^{\infty})$ into $\{n^{-s}\}_n$ an orthonormal basis for $\Ha^2$.
\begin{corollary}\label{3settings}
	The following isometric Banach space equalities hold
	\[
	\mathcal{H}^{\infty}(\mathbb{T^{\infty}}) = H^{\infty}(B_{c_0}) = \mathcal{H}^{\infty},
	\]
	identifying monomial, Fourier and Dirichlet coefficients. The same result holds for $H^2(\mathbb{T}^{\infty})$, that is
		\[ \mathcal{H}^{2}(\mathbb{T^{\infty}}) = H^{2}(B_{c_0}) = \mathcal{H}^{2}
		\]
	isometrically.
	\end{corollary}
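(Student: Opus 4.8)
The plan is to treat the two assertions of the statement separately. The first chain of equalities, $H^{\infty}(\T^{\infty}) = H^{\infty}(B_{c_0}) = \Ha^{\infty}$, is precisely the content of \cite[Corollary 5.3]{DirSer}, so I would simply invoke that result and concentrate all the effort on the second assertion concerning the Hilbert spaces. For the $H^2$ identities the strategy rests on the elementary but robust principle that a linear map carrying one orthonormal basis bijectively onto another extends uniquely to a surjective isometry between the corresponding Hilbert spaces.

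To make this precise for $H^2(\T^{\infty}) = \Ha^2$, I would first recall that the characters $\{w^{\alpha}\}_{\alpha \in \mathbb{Z}^{(\N)}}$ form an orthonormal basis of $L^2(\T^{\infty})$, so that by the very definition of the Hardy space the subfamily $\{w^{\alpha}\}_{\alpha \in \N_0^{(\N)}}$ is an orthonormal basis of $H^2(\T^{\infty})$. By Parseval's identity the norm of $f = \sum_{\alpha} \widehat{f}(\alpha) w^{\alpha}$ is then $\big(\sum_{\alpha} |\widehat{f}(\alpha)|^2\big)^{1/2}$. On the other side, $\{n^{-s}\}_n$ is by construction an orthonormal basis of $\Ha^2$, with norm $\big(\sum_n |a_n|^2\big)^{1/2}$. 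The Bohr transform assigns to $w^{\alpha}$ the monomial $n^{-s}$ with $n = \primos^{\alpha}$, and by the fundamental theorem of arithmetic the correspondence $\alpha \leftrightarrow n$ is a bijection of $\N_0^{(\N)}$ onto $\N$. Hence $\mathfrak{B}$ carries one orthonormal basis bijectively onto the other, so it extends to a surjective isometry, and the coefficient identification $a_n = \widehat{f}(\alpha)$ makes the two $\ell^2$-norms literally coincide.

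The identity $H^2(B_{c_0}) = \Ha^2$ follows by the identical argument, using that the monomials $\{z^{\alpha}\}_{\alpha \in \N_0^{(\N)}}$ constitute an orthonormal basis of $H^2(B_{c_0})$ whose norm is the $\ell^2$-norm of the monomial coefficients $c_{\alpha}$, and that the Bohr transform sends $z^{\alpha}$ to $n^{-s}$ along the same bijection. Composing the two isometries, or equivalently observing that all three norms equal the $\ell^2$-norm of the mutually identified coefficients $c_{\alpha} = \widehat{f}(\alpha) = a_n$, yields the full chain $H^2(\T^{\infty}) = H^2(B_{c_0}) = \Ha^2$ isometrically.

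I do not expect a genuine obstacle here; the only point requiring care is consistency, namely that the monomial, Fourier and Dirichlet coefficients are identified by one and the same bijection $\alpha \leftrightarrow n = \primos^{\alpha}$ across all three spaces. This is exactly the identification built into the Bohr transform (see Theorem~\ref{H inf DirSer}), so once each norm is recognized as the $\ell^2$-norm of these coefficients the three identifications automatically agree, and the argument becomes bookkeeping rather than genuine analysis.
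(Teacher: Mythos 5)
Your proposal is correct and follows essentially the same route as the paper: the first chain of equalities is quoted from \cite[Corollary 5.3]{DirSer}, and the $H^2$ identities are obtained by observing that the Bohr transform carries the orthonormal basis $\{z^{\alpha}\}_{\alpha\in\N_0^{(\N)}}$ (equivalently $\{w^{\alpha}\}$) onto the orthonormal basis $\{n^{-s}\}_n$ of $\Ha^2$, hence extends to a surjective isometry. Your additional bookkeeping (Parseval and the bijection $\alpha\leftrightarrow n=\primos^{\alpha}$ from unique factorization) only makes explicit what the paper leaves implicit.
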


\section{Bessel sequences  and lower frame bounds }\label{SBesselseq}

Our goal is to split Theorem \ref{teoseip} into two parts. More precisely, we aim to characterize whenever a system satisfies one  of the inequalities from \eqref{frame} in terms of the space of multipliers $\mathcal{M}$.

Let us recall some definitions related to systems in Hilbert spaces $H$.

\begin{definition}
	A sequence $\{\varphi_n\}_n \subseteq H$ is a \textbf{frame} for $H$ if there exist $A, B >0$ such that  for all $f \in H$  we have
	\begin{equation}\label{frame}
	A  \|f \|^2 \le \sum_n \big| \langle f, \varphi_n \rangle \big|^2 \le B \|f \|^2.\end{equation}
	At the same time, if only the upper bound holds $\{\varphi_n\}_n$ is said to be a \textbf{Bessel sequence}.
	
\end{definition}
\begin{definition}  Let $\varphi_n$ be a Bessel sequence in a Hilbert space H. The operator  $C: H \rightarrow \ell^2$  defined by $C(f) = \{\langle f, \varphi_n \rangle\}_n$ is called the \textbf{analysis operator} for $\{\varphi_n\}_n$,  and its adjoint $C^*: \ell^2 \rightarrow H$ given by
	$$
	C^*: \{c_n\}_n \mapsto \sum_n c_n \varphi_n,
	$$
	is known as the \textbf{synthesis operator}.
	
\end{definition}

Now we  discuss some simple and useful reformulations for a sequence being Bessel. It is well-known that  $\{\varphi_n\}_n $ is a Bessel sequence if and only if $C^{*}$ is a well-defined bounded operator from $\ell^2$ to $H$. See \cite{Ch2016} for a more detailed presentation.

In our case, the adjoint of $T_{\varphi}$ is given formally (without considering the domain) by the following calculation:
\begin{align*}
\langle g , T_{\varphi} f\rangle = \langle g ,   \sum_{n} \langle f, e_n \rangle \varphi_n \rangle  = \sum_n \overline{\langle f, e_n \rangle} \langle g , \varphi_n \rangle = \langle \sum_n \langle g , \varphi_n \rangle e_n , f \rangle.
\end{align*}
From which  we can deduce that
\begin{equation*}
T^*_{\varphi} (g)= \sum_{n} \langle g, {\varphi}_n \rangle e_n.
\end{equation*}
Then it follows easily from the orthogonality of $\{e_n\}$ that
\begin{equation}
\Big\| T^*_{\varphi} (g) \Big\|= \sum_n \big|  \langle g, {\varphi}_n \rangle  \big|^2.
\end{equation}

\begin{remark}\label{obsC*}
	
	Notice that our case, that is when $\{ \varphi_n \}_n$is a system of dilated functions, we have that the synthesis operator $C^*$  is quite similar to $T_{\varphi}$. Indeed, if we define the isometry $L: \ell^2  \rightarrow L^2 (0,1)$ given by $L(\{c_n\}_n) = \sum_n c_n n^{-s} $, we get the relation
	\begin{equation}
	T_\varphi \circ L = C^*.
	\end{equation}
	This allows us to  deduce that $\{\varphi_n\}_n$ is a Bessel sequence if and only if $T_{\varphi}$ is well-defined and bounded.
	
\end{remark}

We see from Theorem \ref{teo1} below that $\{\varphi_n\}_n$ is a frame if and only if $S\varphi$ and $1/ S\varphi$ both belong to $\mathcal{M}$. Then, by Theorem \ref{teoseip}, this means that the system $\{\varphi_n\}_n$ is a frame if and only if it is a Riesz basis.  We  may also deduce this result from the fact that every frame and minimal sequence  turns out to be a Riesz basis. It is shown in \cite{HeLiSe}  that $\{\varphi_n\}_n$ is  a minimal system. They constructed  a biothogonal system for $\{\varphi_n\}$; that is, a system $\{\psi_n \}$ such that $\langle  \varphi_j , \psi_k  \rangle= \delta_{j,k} $ for all $k$ and $j$.

 \begin{theorem}\label{teo1} Let $\varphi \in L^2 {(0,1)}$ and $\varphi_n (x) = \varphi(nx)$. Then the following assertions hold:
 	\begin{itemize}
 		\item The system $\{\varphi_n\}$ is a Bessel sequence if and only if $S\varphi \in \mathcal{M} $.
 		\item The system $\{\varphi_n\}$ satisfies the lower frame bound  if and only if $1/S\varphi \in \mathcal{M} $. 	\end{itemize}
	
 \end{theorem}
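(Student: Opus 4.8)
The plan is to handle the two bullets separately, in each case transporting the statement to the Dirichlet series side by means of the isometry $S$ and the intertwining relation $S\circ T_{\varphi}=M_{S\varphi}\circ S$, so that the frame conditions become boundedness, resp.\ bounded-below, properties of the multiplication operator $M_{S\varphi}$. For the Bessel statement I would start from Remark~\ref{obsC*}, which reduces it to the boundedness of $T_{\varphi}$. Since $S$ is an isometric isomorphism of $L^2(0,1)$ onto $\mathcal{H}^2$ that maps the dense subspace $\mathcal{F}$ onto the finite Dirichlet series, the relation $S\circ T_{\varphi}=M_{S\varphi}\circ S$ on $\mathcal{F}$ shows that $T_{\varphi}$ is bounded (in the $L^2$ norm) if and only if $M_{S\varphi}$ is bounded on the finite Dirichlet series (in the $\mathcal{H}^2$ norm); by the definition of a multiplier together with Theorem~\ref{Multipliers}, this is exactly $S\varphi\in\mathcal{M}$.

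For the lower frame bound I would first rephrase it analytically. Using $\|T^*_{\varphi}g\|^2=\sum_n|\langle g,\varphi_n\rangle|^2$ together with $S\circ T^*_{\varphi}=M^*_{S\varphi}\circ S$ (the Hilbert adjoint of the intertwining relation, legitimate because $S$ is unitary), the condition $\sum_n|\langle g,\varphi_n\rangle|^2\ge A\|g\|^2$ for all $g$ becomes $\|M^*_{S\varphi}F\|^2\ge A\|F\|^2$ for all $F\in\mathcal{H}^2$; that is, the a priori unbounded operator $M^*_{S\varphi}$ is bounded below. I would also record that $T_{\varphi}$ is closable: each $e_m$ lies in $\Dom(T^*_{\varphi})$ because $\sum_n|\langle e_m,\varphi_n\rangle|^2=\sum_{n\mid m}|a_{m/n}|^2\le\|\varphi\|^2<\infty$, so $T^*_{\varphi}$ is densely defined and $\overline{T_{\varphi}}$ exists.

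The easy implication is $1/S\varphi\in\mathcal{M}\Rightarrow$ lower bound. Writing $G=1/S\varphi$, the boundedness of $M_G$ (hence of $M^*_G$) and the formal identity $S\varphi*G=1$ give, for every finite Dirichlet series $E$ and every $F\in\Dom(M^*_{S\varphi})$,
\[
\langle M^*_G M^*_{S\varphi}F,\,E\rangle=\langle M^*_{S\varphi}F,\,M_G E\rangle=\langle F,\,M_{S\varphi}M_G E\rangle=\langle F,E\rangle,
\]
since $M_{S\varphi}M_G E=E$. Hence $M^*_G M^*_{S\varphi}F=F$, so $\|F\|\le\|M_G\|\,\|M^*_{S\varphi}F\|$, which is the lower bound with $A=\|M_G\|^{-2}$ (and for $F\notin\Dom(M^*_{S\varphi})$ the sum is $+\infty$, so the inequality is trivial).

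The reverse implication is where the loss of boundedness really bites, and it is the main obstacle. Knowing only that $M^*_{S\varphi}$ is bounded below, I would invoke the closed range theorem for the closed densely defined operator $\overline{M_{S\varphi}}=(M^*_{S\varphi})^*$: such an operator is surjective onto $\mathcal{H}^2$ precisely when its adjoint is bounded below, so $\overline{M_{S\varphi}}$ is onto. Injectivity is not furnished by this and I would extract it from analyticity: if $\overline{M_{S\varphi}}E=0$ there are finite $E_k\to E$ in $\mathcal{H}^2$ with $S\varphi*E_k\to0$, and since $\mathcal{H}^2$-convergence forces locally uniform convergence on $\mathbb{C}_{1/2}$ where the product is pointwise (by \eqref{identidad}), one gets $S\varphi(s)E(s)=0$ on $\mathbb{C}_{1/2}$, whence $E\equiv0$ because $S\varphi\not\equiv0$ has a discrete zero set. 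Thus $\overline{M_{S\varphi}}$ is a closed bijection, its inverse is bounded by the closed graph theorem, and comparing that inverse pointwise with the formal Dirichlet quotient identifies it with $M_{1/S\varphi}$ (in particular forcing $a_1\neq0$), so $1/S\varphi\in\mathcal{M}$. The delicate steps are exactly the closability and the careful interpretation of the unbounded multiplication operator and its adjoint, which is why the original bounded-operator reasoning does not apply here.
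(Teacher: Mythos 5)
Your proposal follows essentially the same route as the paper's proof: both bullets are transported to the multiplication operator via the unitary $S$, the lower frame bound is identified with the adjoint (analysis) operator being bounded below, and the decisive step is the closed range theorem for a closed, densely defined unbounded operator, followed by the translation of surjectivity into $1/S\varphi\in\mathcal{M}$ through the formal Dirichlet inverse. The only differences are cosmetic but worth a word of care: the paper takes $M_{S\varphi}$ on its maximal domain and proves closedness directly (so that both implications reduce to surjectivity, making your injectivity and closed-graph steps unnecessary), whereas your easy-direction identity $\langle M^*_{S\varphi}F, M_G E\rangle=\langle F, M_{S\varphi}M_G E\rangle$ silently requires $G*E$ to lie in the domain of the closed operator being adjointed --- immediate for the maximal domain, but not obvious for the closure of the minimal one that you work with, so you should either use the maximal domain there or note that the two closed operators (and hence their adjoints) coincide.
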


The central difficulty in the proof of this theorem is in the second part, and it is related to the fact that the operator $T_{\varphi}$ need not be bounded.

\begin{proof}
By  Remark \ref{obsC*}, $\{\varphi_n\}$ is a Bessel sequence if and only if  $T_{\varphi}$ is  a well-defined and bounded operator.  By the identity \begin{equation}\label{eq-TfiM-conj}
 T_{\varphi} = S^{-1} \circ M_{S\varphi}\circ S,
\end{equation}
this is equivalent to  $M_{S\varphi}$ being well-defined and bounded, i.e., to $S\varphi$ being a multiplier.  Moreover, the best Bessel constant is given by $\norm{T_{\varphi}}= \norm{M_{S\varphi}}$ and by Theorem \ref{Multipliers} it is also equal to  $\norm{S\varphi}_{\mathcal{H}^\infty}$.

\smallskip
For the proof of the second assertion, we use again the fact that  the operators $T_{\varphi}$ and $M_{S\varphi}$ are conjugated (see  \eqref{eq-TfiM-conj}).
We consider \begin{equation}
M_{S\varphi}:  \Dom(M_{S\varphi}) \subseteq \mathcal{H}^2 \rightarrow \mathcal{H}^2,
\end{equation}
 where $\Dom(M_{S\varphi})$ is the natural domain for a multiplication operator. That is, all the Dirichlet series $D \in \Ha^2$ such $S\varphi * D \in \Ha^2$.

Note that, since $S\varphi$ belongs to $\mathcal{H}^2$, $\Dom(M_{S\varphi})$ contains $\mathcal{M} = \mathcal{H}^{\infty}$ (this is a consequence of Theorem~\ref{Multipliers}). This means that $M_{S\varphi}$ is {densely defined}, since $\mathcal{H}^{\infty}$ is dense in $\mathcal{H}^2$.

Let us see that $M_{S\varphi}$ is also {closed}:
take $D_N \in \Dom(M_{S\varphi})$ such that
 	\begin{equation*}
	 	D_N \xrightarrow[]{\mathcal{H}^2} D \quad \textit{ and } \quad  M_{S\varphi}(D_N)  \xrightarrow[]{\mathcal{H}^2} \widetilde{D}.	\end{equation*}
 	We need to show that $ M_{S\varphi}(D) = \widetilde{D}$. The convergence in $\mathcal{H}^2$  implies that  $D_N$ converge pointwise to $D$ in  $\mathbb{C}_{\frac{1}{2} + \varepsilon}$  for every $\varepsilon >0$. We fix any such  $\varepsilon$, so we have that $	
		D_N (s) \rightarrow D(s)$  for all $s \in \mathbb{C}_{\frac{1}{2} + \varepsilon}$ which implies that 	$
 S\varphi(s) D_N (s) \rightarrow  S\varphi(s) D(s)$ for all $s \in \mathbb{C}_{\frac{1}{2} + \varepsilon}$.  Since this holds for every $\varepsilon >0$ and we also have that $M_{S\varphi}(D_N)  \xrightarrow[]{} \widetilde{D}$ pointwise, it  follows that $ S\varphi D = \widetilde{D}$, as desired.

\smallskip
 Consider the following operator $ Q = S^{-1} \circ M_{S\varphi} \circ S $, with
$$
 \Dom(Q)= S^{-1}(\Dom(M_{S\varphi})).
$$
Also consider $R : \Dom(R) \subset L^2 {(0,1)} \rightarrow L^2 {(0,1)} $
$$
 R(f) = \sum \langle f , \varphi_n \rangle e_n ,
$$
where $\Dom(R)=  \{ f \in L^2(0,1): \sum |\langle f , \varphi_n \rangle |^{2} < \infty \}$. We claim that $R=Q^{*}$. Recall that $f \in \Dom(Q^{*})$ if and only if
 $$ g \mapsto \langle Qg , f \rangle $$ is a continuous map for all $g \in \Dom(Q)$. In this case, we can write
\begin{align*}
 \langle Qg , f \rangle &=  \langle S^{-1} \circ M_{S\varphi} \circ Sg , f \rangle = \langle M_{S\varphi} \circ Sg , Sf \rangle \\
&= \sum_n \left( \sum_{kl = n} \langle \varphi , e_k \rangle \langle g , e_l \rangle \right) \overline{\langle f , \varphi_n \rangle }\\
&= \sum_n  \langle g , e_n \rangle  \overline{\langle f , \sum_k  \langle \varphi , e_k \rangle e_{kl}  \rangle}\\
&=  \langle g , \sum_n \langle f, \varphi_n \rangle e_n \rangle  =  \langle g , Rf \rangle.
\end{align*}

Then,  $g \mapsto \langle Qg , f \rangle $ is continuous if and only if $\sum_n \langle f, \varphi_n \rangle e_n$ belongs to $ L^2 (0,1)$ or, equivalently, $ \sum |\langle f , \varphi_n \rangle |^{2} $ is finite.  This, together with the previous calculations, shows that  $\Dom(R)=\Dom (Q^{*})$ and $R= Q^*.$

Recall that $\{\varphi_n\}_n$ satisfies the lower frame bound if there exists $A>0$ such that
\begin{equation*}
A \norm{f}^2 \le \sum |\langle f , \varphi_n \rangle |^{2} \quad \forall\, f \in L^2 (0,1).
\end{equation*}
Then, it is clear that $\{\varphi_n\}_n $ satisfies the lower frame bound if and only if the operator $R=Q^*$ is bounded below.

Some properties of bounded operators generalize to closed densely defined operators. For example, a variant of the closed range theorem is valid. Applying this to $Q$ (we have seen that $Q$ is a closed densely defined operator), we get that $Q$ is surjective if and only if $Q^*$ is bounded below.

Finally, we need to prove that the operator $Q$ is surjective if and only if  $1/S\varphi \in  \mathcal{M}$.  Assumed that $Q$ is surjective and take $D \in \Ha^2$. Since $M_{S\varphi}$ is also suryective, there exists $E \in \Dom(M_{S\varphi})$ such that
$$
M_{S\varphi} (E) = S\varphi * E = D
$$

Then, we have
$$
\frac{1}{S\varphi} * D = \frac{1}{S\varphi} * S\varphi * E = E \in \Ha^2.
$$
Therefore,  $1 / S\varphi * D \in \Ha^2$ for every $D \in \Ha^2$ and $  1/S\varphi \in \mathcal{M}$. Suppose now that $1/S\varphi$ is a multiplier and take $E \in \Ha^2$. We have that
$ \frac{1}{S \varphi} * E $ belongs to $\Ha^2$ and
$$
M_{S\varphi} \left(\frac{1}{S \varphi} * E \right) = E,
$$
and this proves that $Q$ is surjective.
\end{proof}

\begin{remark}\label{ciclicos}
It is known that a function $F \in  \Ha^2$ such that $1/F \in  \Ha^\infty$ is cyclic for $\Ha^2$ (see \cite{HeLiSe} or \cite{NI12}). In this direction, the second part of Theorem \ref{teo1} gives a new characterization of this type of cyclic vectors for $\Ha^2$.
\end{remark}

\section{Riesz sequences}\label{Srieszseq}

In the previous section we studied the characterization from \cite{HeLiSe} of Riesz bases of dilations. In Theorem \ref{teo1} we weakened the assumption of $\{\varphi_n\}$ being a Riesz basis to satisfying only one of the frame inequalities. Another natural way to weaken the Riesz basis condition is to assume that $\{\varphi_n\}$ is just a Riesz sequence.
\begin{definition}\label{sucriez} A sequence $\{\varphi_n\}_n \subset H$ is a \textbf{Riesz sequence} if it is a Riesz basis for its closed span in H. In other words, a Riesz sequence satisfies \eqref{RieszBasis} but is not necessarily complete in  $H$.
\end{definition}

In our case, a look at  \eqref{RieszBasis} shows that the system of dilated functions $\{\varphi_n\}_n \subset L^2 (0,1)$ is a Riesz sequence if and only if there exist $A, B >0$ such that
$$
A \| f\| \le \| T_{\varphi} f \| \le B \| f \|,
$$
for every finite sum $f= \sum c_n e_n$.  Since $e_n$ is a basis for $L^2(0,1)$, we deduce that $\{\varphi_n\}_n$ is a Riesz sequence if and only if the operator $T_{\varphi}$ is bounded and bounded below.  Also, since $T_{\varphi} = S^{-1} \circ M_{S\varphi}\circ S$, this is also equivalent to  $M_{S\varphi}: \Ha^2 \rightarrow \Ha^2 $ being bounded and bounded below.

We want to determine conditions for $\{\varphi_n\}$ to be a Riesz sequence in terms of the Dirichlet series  $S\varphi$. Looking at the results in  \cite{HeLiSe} and the previous section, one may be tempted to guess that  $M_{S\varphi}$ is bounded below if and only if there exists some $\varepsilon>0$ such that $|S \varphi(s)|> \varepsilon $ for all $s \in \mathbb{C}_{0}$. The following simple example shows this is not the case.
\begin{example}\label{ex-nobdedbelow}
	If we consider $\varphi(x)= e_2(x)=\sqrt{2}\sin(2\pi x) $, then $\{\varphi_n\} $ is clearly a Riesz sequence (in fact, it is an orthonormal sequence). However,
	$
	S\varphi(s)=  \frac{1}{2^s}$, which is not bounded below in $ \mathbb{C}_{0}.
	$
\end{example}

To discuss the main theorem of this section, we need to introduce various concepts regarding characters of compact group and Dirichlet series. For more details about these topics see \cite{RUD62} and \cite{HeLiSe}.

\subsection{Horizontal translations and Fatou's Theorem}

Recall that a function $\gamma: \N \rightarrow \mathbb{C}$ is a \textbf{\textit{character}} if it satisfies:
\begin{itemize}
	\item $\gamma (mn)= \gamma (m) \gamma(n)$ for all $n, m \in \N$;
	\item $|\gamma(n)|= 1$ for all $n \in \N$.
\end{itemize}
We denote by $\Xi$ the set of all such characters.

Given a Dirichlet series $D = \sum a_n n^{-s}$, each character $\gamma \in \Xi$ defines a new Dirichlet series by
\begin{equation}\label{eq-Dchi}
  	D^{\gamma} (s) = \sum a_n \gamma(n) n^{-s}.
\end{equation}
In particular, for   $\gamma(n)=n^{-it}$ we get
$$
D^{\gamma}(s)= \sum_n a_n n^{-it} n ^{-s} = D(s + it)
$$
which is called a \textbf{\emph{horizontal translation}} of $D$.

Characters $\gamma \in \Xi$ identify with elements $w\in \mathbb{T^{\infty}}$, taking $w=(\gamma(p_1), \gamma(p_2), \dots)$.
With this in mind, we rewrite the Dirichlet series in \eqref{eq-Dchi} as
\begin{equation}\label{eq-Domega}
 D^w (s)= \sum a_n w^{\alpha(n)} n^{-s},
\end{equation}
with  $w$ as above .

Note that if $f= \mathfrak{B}^{-1}(D)$, by definition of the Bohr transform we have
\begin{equation}\label{relDw}
\mathfrak{B}^{-1}(D^{w})(u)=f(w \cdot u).
\end{equation}
Now we turn to our desired characterization and assume that the system  $\{\varphi_n \}_n$ is a Riesz sequence. By Theorem \ref{teo1}, we know that $D= S\varphi$ belongs to  $\mathcal{M}  =\mathcal{H}^{\infty}$. Then, its Bohr antitransform $f=\mathfrak{B}^{-1}(D)$ belongs to  $H^{\infty}(\mathbb{T^{\infty}})$. By \eqref{relDw} we have that for each  $ w\in \mathbb{T}^{\infty}$ the function $\mathfrak{B}^{-1}(D^{w})$ is also in $H^{\infty}(\mathbb{T^{\infty}})$. This means that the Dirichlet series $D^w$  also  belongs to $\Ha^{\infty}$ for every  $ w\in \mathbb{T}^{\infty}$. Then, an adaptation of Fatou's Theorem for Dirichlet series in $\C_{>0}$ guarantees that for all  $w \in \mathbb{T}^{\infty}$ the limit
\begin{equation}
\lim\limits_{\sigma \to 0^{+}} D^{w}(\sigma + i t) \text{ exists for almost all } t.
\end{equation}
Details of this can be found in \cite[Lemma 11.22]{DirSer}.

As observed in \cite[Theorem 2]{SaSe}, this allows us to pick a representative $\widetilde{f}$ of $f$ in $H^{\infty}(\mathbb{T^{\infty}})$ which satisfies
\begin{align}\label{rep}
\widetilde{f}(w) =
\begin{cases}
\lim\limits_{\sigma \to 0^{+}} D^{w}(\sigma)  &\quad\text{if the limit exists;} \\
0 &\quad\text{otherwise.}\\
\end{cases}
\end{align}
We present some details for the sake of completeness. Let $$A=\{w \in \mathbb{T^{\infty}}: \lim\limits_{\sigma \to 0^{+}}  D^w(\sigma) \text{ exists.}\}.$$  Let us see that $|A|=1$. Since $\chi_A \in  L^1({\mathbb{T}}^{\infty}) $ we can apply the Birkhoff-Khinchin ergodic Theorem \cite{BIR31} to get that for almost all $w_0 \in \mathbb{T}^{\infty}$ we have
$$
 \int\limits_{\mathbb{T^{\infty}}} \chi_{A}(w) \, dw = \lim\limits_{R \to \infty} \frac{1}{2R} \int\limits_{-R}^{R}\chi_A (T_t(w_o)) \, dt.
$$
Here  $T_t : \mathbb{T^{\infty}} \rightarrow \mathbb{T^{\infty}}$ is the map $T_t(w) = (\primos^{-it} \cdot w)$. The system  $\{T_t\}_t$ is known as the Kronecker flow and its ergodicity can be found \cite{ERGODIC12}.

Notice that $T_t(w_0)$ belongs to $A$ if and only if the $\lim\limits_{\sigma \to 0}  D^{T_t(w_0)}(\sigma)$ exists. Applying Fatou's Theorem for Dirichlet series we deduce  that $T_t(w_0)$ belongs to $A$ for almost all $t$ and consequently $|A|=1$. This proves that $\widetilde{f}$ belongs to $H^{\infty}(\mathbb{T}^{\infty})$ and, to see that $\widetilde{f}$ is a representative of $f$, it is enough to compare their Fourier coefficients (see \cite[Theorem 2]{SaSe}). From now to the end of this section,  $f$  is  always the representative that satisfies \eqref{rep}.

Let us fix the following notation:
$$	D^{w} (it_0) := \lim\limits_{\sigma \to 0^{+}} D^{w}(\sigma +it_0).$$
%$$	\text{ and consequently }|D^{w} (it_0)| = \lim\limits_{\sigma \to 0^{+}} |D^{w}(\sigma +it_0)|.$$
Since $f$ satisfies \eqref{rep}, taking $t_0= 0$ we get that for almost all $w \in \mathbb{T^{\infty}}$
$$f(w) = D^{w}(0).$$
Moreover, given $t_0 \in \mathbb{R}$ we have
\begin{equation}\label{relaciont0}
 D^{w}(it_0)= \lim\limits_{\sigma \to 0^{+}} D^{w}(\sigma +it_0) =\lim\limits_{\sigma \to 0^{+}} D^{T_{t_0}(w)}(\sigma) = f (T_{t_0}(w)).
\end{equation}
From this identity we derive the next remark.

\begin{remark}\label{equivDw} The following conditions are equivalent.	
	\begin{enumerate}
	\item[$i)$] There exists $\tilde{t}_0$ such that $
	|D^{w}(i\tilde{t}_0)| \geq \varepsilon $ for almost every $ w  \in \mathbb{T^{\infty}}.$
	\item[$ii)$] For all $t_0$ there exists $B_{t_0} \subset \mathbb{T^{\infty}}$ with total measure  such that $
	|D^{w}(it_0)| \geq \varepsilon $ for all $ w  \in B_{t_0}.$
	\end{enumerate}
\end{remark}
Obviously , $ii)$ implies $i)$. For the converse, take $t_0 \in \mathbb{R}$ and set $$B_{t_0} = \{\primos^{-i(t_0 + \tilde{t_0})} \cdot w : w \in B_{\tilde{t_0}}\},$$
which is clearly of total measure.
Take $w' \in B_{t_0}$ and choose $w \in B_{\tilde{t}_0}$ such that $w'=\primos^{-i(t_0 + \tilde{t_0})} \cdot w$, thanks to \eqref{relaciont0} we get that
$$
  |D^{w'}(it_0)| = |f(T_{\tilde{t}_0}(w))|  \geq \varepsilon .$$

Now we are ready to state the main result of the section.

\subsection{Main results and applications}
The remainder of this section is devoted to state and prove our main result.
 \begin{theorem}\label{teo2} Let $\varphi \in L^{2}(0,1)$ and set $D= S\varphi$ and $f=\mathcal{B}^{-1}(D)$. The following statements are equivalent.
 	\begin{itemize}
 	\item[$i)$] The system $\{\varphi_n\}_n$ is a Riesz sequence.
 	\item[$ii)$] The function $f$ belongs to $H^{\infty}(\mathbb{T^{\infty}})$ and there exist $\varepsilon>0$ and a set $B \subseteq \mathbb{T^{\infty}}$ of total measure such that
 	\begin{equation}\label{acinff}
 	|f(z)| \geq \varepsilon, \qquad \textit{for all }z \in B \subset \mathbb{T^{\infty}}.
 	\end{equation}
 	\item[$iii)$] The Dirichlet series $D$ belongs to $\mathcal{H}^{\infty} $ and there exists $\varepsilon >0$ such that for almost all  $(\gamma, t) \in \Xi \times \mathbb{R}$ we have
 	\begin{equation}\label{Dgeqep}
 	|D^{\gamma} (it)| \geq \varepsilon.
 	\end{equation}
 	
 	\end{itemize}

In this case,
\begin{equation}
\varepsilon \left(\sum |b_n|^2  \right) ^{1/2} \le \norm[\Big]{\sum b_n \varphi_n}_{L^2} \le \norm{D}_{\mathcal{H}^{\infty}} \left(\sum |b_n|^2  \right) ^{1/2},
\end{equation}
for every finite sequence of scalars.
\end{theorem}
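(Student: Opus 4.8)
The plan is to route everything through the identification $\mathcal{H}^2 \cong H^2(\T^\infty)$ of Corollary~\ref{3settings} and to reduce the Riesz sequence condition to a statement about the multiplication operator on $H^2(\T^\infty)$. Recall from the discussion preceding the theorem that $\{\varphi_n\}$ is a Riesz sequence if and only if $M_{S\varphi}=M_D$ is bounded and bounded below on $\mathcal{H}^2$; and by Theorem~\ref{teo1} (together with Theorem~\ref{Multipliers}) boundedness is equivalent to $D\in\mathcal{H}^\infty$, i.e.\ to $f=\mathfrak{B}^{-1}(D)\in H^\infty(\T^\infty)$. This boundedness is the common first half of $i)$, $ii)$ and $iii)$, so once it is secured the entire content is to characterize when $M_f$ is bounded below. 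Under the Bohr isometry, $M_D$ on $\mathcal{H}^2$ is unitarily equivalent to $M_f\colon g\mapsto fg$ on $H^2(\T^\infty)$, and since $f\in H^\infty\subseteq L^\infty$ and $H^2\subseteq L^2$ isometrically, one has the basic formula $\|M_f g\|_{H^2}^2=\int_{\T^\infty}|f|^2|g|^2\,dw$.

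First I would prove $i)\Leftrightarrow ii)$ in the sharp form: $M_f$ is bounded below by $\varepsilon$ if and only if $|f|\ge\varepsilon$ a.e.\ on $\T^\infty$. The implication $(\Leftarrow)$ is immediate from the displayed formula. For $(\Rightarrow)$, set $\psi=|f|^2-\varepsilon^2\in L^\infty(\T^\infty)$; the lower bound says $\int_{\T^\infty}\psi\,|g|^2\,dw\ge 0$ for every $g\in H^2$, i.e.\ the Toeplitz operator $T_\psi=P\,M_\psi|_{H^2}$ is positive semidefinite. I would deduce $\psi\ge0$ a.e.\ by a finite-dimensional reduction: restricting the test functions to those depending only on the first $N$ coordinates shows $T_{\mathbb{E}_N\psi}\ge0$ on $H^2(\T^N)$, where $\mathbb{E}_N$ is the conditional expectation onto the first $N$ variables. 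On the finite polydisk, testing $T_{\mathbb{E}_N\psi}\ge0$ against normalized Szeg\H{o} kernels $k_\lambda$, $\lambda\in\D^N$, gives that the product Poisson integral of $\mathbb{E}_N\psi$ is nonnegative on $\D^N$; letting $\lambda$ tend radially to the boundary and using a.e.\ radial convergence of Poisson integrals yields $\mathbb{E}_N\psi\ge0$ a.e.\ on $\T^N$ for every $N$. The martingale convergence theorem then gives $\mathbb{E}_N\psi\to\psi$ a.e., whence $\psi\ge0$ a.e., as required.

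Next I would prove $ii)\Leftrightarrow iii)$ using the boundary machinery already in place. With the representative of $f$ from \eqref{rep} and the identity \eqref{relaciont0}, we have $D^w(it)=f(T_t w)$, where $T_t$ is the Kronecker flow, which preserves Haar measure on $\T^\infty$. Hence for each fixed $t$ the set $\{w:|D^w(it)|\ge\varepsilon\}$ has full measure if and only if $\{w:|f(w)|\ge\varepsilon\}$ does, and by Fubini the requirement $|D^w(it)|\ge\varepsilon$ for a.e.\ $(w,t)$ is equivalent to $|f|\ge\varepsilon$ a.e.; this is precisely Remark~\ref{equivDw} specialized at $t_0=0$, where $f(w)=D^w(0)$. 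Reinstating the identification $\Xi\cong\T^\infty$ converts \eqref{Dgeqep} into \eqref{acinff}, giving $ii)\Leftrightarrow iii)$.

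Finally, the norm bounds follow by unwinding the isometries. For a finite sum put $E=S(\sum b_n e_n)=\sum b_n n^{-s}$, so that $\|\sum b_n\varphi_n\|_{L^2}=\|M_f\,\mathfrak{B}^{-1}E\|_{H^2}$ while $(\sum|b_n|^2)^{1/2}=\|E\|_{\mathcal{H}^2}=\|\mathfrak{B}^{-1}E\|_{H^2}$. The upper bound is $\|M_f\|=\|f\|_{\infty}=\|D\|_{\mathcal{H}^\infty}$ (Theorem~\ref{Multipliers} and Corollary~\ref{3settings}), and the lower bound is the constant $\varepsilon$ furnished by the $(\Leftarrow)$ estimate above. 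I expect the main obstacle to be the forward implication of $i)\Leftrightarrow ii)$: passing from ``$M_f$ bounded below'' to the pointwise bound $|f|\ge\varepsilon$ is delicate because functions in $H^2(\T^\infty)$ cannot in general realize a prescribed modulus (inner--outer factorization fails in several variables), so the naive disk-style concentration argument is unavailable; the Toeplitz-positivity route through finite sections, Poisson integrals, and martingale convergence is what makes it go through.
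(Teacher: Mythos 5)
Your proposal is correct, and it follows the paper's overall architecture (reduce to boundedness-below of $M_f$ on $H^2(\mathbb{T}^\infty)$ via the Bohr isometry; handle $ii)\Leftrightarrow iii)$ through the boundary representative, the Kronecker flow and Fubini; read off the constants from the isometries). The one place where you genuinely diverge is the key implication that boundedness below of $M_f$ forces $|f|\ge\varepsilon$ a.e. You correctly identify this as the delicate step and resolve it by Toeplitz positivity: writing $\psi=|f|^2-\varepsilon^2$, passing to finite sections $\mathbb{E}_N\psi$, testing against normalized Szeg\H{o} kernels to get nonnegativity of the product Poisson integral on $\mathbb{D}^N$, taking radial boundary limits (unproblematic here since $\psi\in L^\infty$, so the strong-maximal-function issues on the polydisk do not arise), and finishing with martingale convergence $\mathbb{E}_N\psi\to\psi$ a.e. This works, but it imports a fair amount of machinery. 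The paper's argument is considerably more elementary: it sets $N=\{|f|<\varepsilon\}$, approximates $\chi_N$ in $L^2(\mathbb{T}^\infty)$ by trigonometric polynomials $P_k$, and multiplies each $P_k$ by a monomial $z_1^{n_k}\cdots z_{N_k}^{n_k}$ of high enough degree to push all frequencies into $H^2(\mathbb{T}^\infty)$ without changing any $L^2$ norms; applying the lower bound to these test functions gives $\varepsilon^2|N|\le\int_N|f|^2$, which contradicts $|f|<\varepsilon$ on $N$ unless $|N|=0$. What your route buys is a sharper structural statement (positivity of the Toeplitz operator $T_\psi$ implies $\psi\ge0$, a fact of independent interest); what the paper's route buys is brevity and self-containedness, needing only density of trigonometric polynomials and the fact that multiplication by a unimodular monomial is an $L^2$ isometry. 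Both are valid proofs of the theorem.
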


\begin{proof} It follows from Theorem \ref{teo1} that $\{\varphi_n\}_n$ satisfies the right-hand inequality from \eqref{RieszBasis} if and only if $D$ belongs to $\mathcal{H}^{\infty}$, which in turn is equivalent to $f$ being in $H^{\infty}(\mathbb{T^{\infty}})$. So we have to show the part involving the lower bounds.
	
It is clear from the definitions that  $\{\varphi_n\}_n $ is a Riesz sequence if and only if the operator $T_{\varphi}$ is  bounded and bounded below. This is  equivalent to $M_f: H^2 (\mathbb{T^{\infty}}) \rightarrow H^2(\mathbb{T^{\infty}})$ being bounded and bounded below.
	
As a first step, we  show that the bounded operator $M_f:H^2(\mathbb{T^{\infty}}) \to H^2(\mathbb{T^{\infty}})$ is bounded below if an only if there exists a set $B \subseteq \mathbb{T^{\infty}}$ of total measure on which $|f|$ is bounded below. The \emph{if} part is clear,
so assume that  $M_f$ is bounded and bounded below with constant $\varepsilon$ and consider  the set
$$
N= \{z \in \mathbb{T^{\infty}} : |f(z)| <  \varepsilon \}.
$$	
Set $\chi_{N}$ to be the characteristic function of $N$. Since trigonometric polynomials are dense in $L^2 (\mathbb{T^{\infty}})$ (see \cite[Proposition 5.5]{DirSer}) there exist a sequence  $P_k$ of polynomials of degree $n_k$ in $N_k$ variables (in  $z$ and $\overline{z}$) such that
\begin{equation}\label{approxchar}
\lim_{k} P_k = \chi_N \quad \text{in } L^2 (\mathbb{T^{\infty}}).
\end{equation}
A simple calculation shows that
$$
 |N| =  \norm{\chi_N}^2_{L^2} = \lim_{k}  \norm{P_k}^2_{L^2}  =  \lim_{k}  \norm{z^{n_k}_1 \ldots z^{n_k}_{N_k} P_k}_{L^2}^2 .
$$
Now we have that the polynomials  $z^{n_k}_1 \ldots z^{n_k}_{N_k} P_k$ belong to $H^2 (\mathbb{T^{\infty}})$. Our assumptions on $M_f$ give
\begin{eqnarray*}
% \nonumber % Remove numbering (before each equation)
  \varepsilon^2 |N| &\le & \liminf_k \norm{M_f (z^{n_k}_1 \ldots z^{n_k}_{N_k} P_k)}^2= \norm{f\, \chi_N}^2_{L^2} \\
   &=& \int\limits_{N} |f|^2  dz
\end{eqnarray*}
Since $|f|<\varepsilon$ on $N$, this implies that $|N|=0$, so the set $B=N^c$ satisfies \eqref{acinff}.

To end the proof, we  show next that \eqref{Dgeqep} is equivalent to \eqref{acinff}.  If \eqref{acinff} holds, by the definition of $f$ we have $|f(w)|=\lim\limits_{\sigma \to 0} |D^{w}(\sigma)|\geq \varepsilon$ for $w \in B$ ($|B|=1$). Combining this with Remark \ref{equivDw} we get  that the $t$-sections of the set
\[
C=\{(t, w) \in \mathbb{R} \times \mathbb{T^{\infty}}: |D^{w}(it)|< \varepsilon \},
\]
have zero measure. As a collorary of Fubini's Theorem we get that $C$ has measure zero.  The converse also follows from Fubini's Theorem.
\end{proof}

An immediate consequence of the previous theorem is the following characterization for orthonormal sequences of dilations.

\begin{corollary}\label{coro}
	The system $\varphi_n$ is an orthonormal sequence  if and only if $D \in \mathcal{H}^{\infty}$ and for almost all $\gamma \in \Xi$ and almost all $t \in \mathbb{R}$ we have $| D^{\gamma}(it)|=1.$
\end{corollary}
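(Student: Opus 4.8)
The plan is to reduce orthonormality to an isometry statement for the multiplication operator, then to identify the multipliers that act isometrically, and finally to transcribe the resulting condition on $f$ into the claimed condition on the vertical limits $D^{\gamma}(it)$.

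First I would observe that $\{\varphi_n\}_n$ is an orthonormal sequence if and only if $\|\sum c_n \varphi_n\|_{L^2}^2 = \sum |c_n|^2$ for every finite scalar sequence: expanding the left-hand side and polarizing recovers $\langle \varphi_n, \varphi_m\rangle = \delta_{n,m}$, and the converse is immediate. Since $\|\sum c_n e_n\|^2 = \sum|c_n|^2$, this says precisely that $T_{\varphi}$ is an isometry on $\mathcal{F}$. Using the conjugation $T_{\varphi}=S^{-1}\circ M_{S\varphi}\circ S$ from \eqref{eq-TfiM-conj}, together with the identification $\mathcal{H}^2=H^2(\mathbb{T^{\infty}})$ of Corollary \ref{3settings} and the density of the polynomials in $H^2(\mathbb{T^{\infty}})$, this is equivalent to $M_f\colon H^2(\mathbb{T^{\infty}})\to H^2(\mathbb{T^{\infty}})$ being an isometry, where $f=\mathfrak{B}^{-1}(D)$. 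In particular $M_f$ is then bounded, so by Theorem \ref{teo1} we already have $D\in\mathcal{H}^{\infty}$, as required in the statement.

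Next I would characterize the isometric multipliers as the unimodular ones: $M_f$ is an isometry if and only if $|f|=1$ almost everywhere on $\mathbb{T^{\infty}}$. The \emph{if} direction is the computation $\|M_f g\|_{L^2}^2 = \int_{\mathbb{T^{\infty}}} |f|^2|g|^2 = \|g\|_{L^2}^2$. For the converse, testing on $g=1$ gives $\int_{\mathbb{T^{\infty}}}|f|^2 = 1$, while the isometry forces the operator norm $\|M_f\|=1$; by Theorem \ref{Multipliers} together with Corollary \ref{3settings} this operator norm equals $\|f\|_{H^{\infty}(\mathbb{T^{\infty}})}=\operatorname{ess\,sup}|f|$, so $|f|\le 1$ almost everywhere. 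Combining $\int_{\mathbb{T^{\infty}}}(1-|f|^2)=0$ with $1-|f|^2\ge 0$ then yields $|f|=1$ almost everywhere.

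Finally I would transfer $|f|=1$ a.e.\ on $\mathbb{T^{\infty}}$ into $|D^{\gamma}(it)|=1$ for almost every $(\gamma,t)$, reusing the mechanism from the end of the proof of Theorem \ref{teo2}. By \eqref{relaciont0} we have $D^{w}(it_0)=f(T_{t_0}(w))$ for the chosen representative; since each $T_{t_0}$ is a measure-preserving rotation of $\mathbb{T^{\infty}}$, $|f|=1$ a.e.\ implies that for every fixed $t_0$ the section $\{w:|D^{w}(it_0)|\neq 1\}$ is null, and Fubini then shows $\{(t,w):|D^{w}(it)|\neq 1\}$ is null in $\mathbb{R}\times\mathbb{T^{\infty}}$, i.e.\ \eqref{Dgeqep} with equality. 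Conversely, if $|D^{w}(it)|=1$ for almost all $(t,w)$, Fubini produces some $t_0$ whose section has full measure, and measure-preservation of $T_{t_0}$ turns $|f(T_{t_0}(w))|=1$ a.e.\ into $|f|=1$ a.e. The main obstacle, and the only genuinely non-formal step, is the unimodularity characterization of the isometric multiplier $M_f$: one must be sure that the identification of the multiplier norm with $\operatorname{ess\,sup}|f|$ (via Theorem \ref{Multipliers} and Corollary \ref{3settings}) is exactly what upgrades the easy bound $\int_{\mathbb{T^{\infty}}}|f|^2=1$ to the pointwise equality $|f|=1$; the remaining reductions and the Fubini transfer are routine given the machinery already developed for Theorem \ref{teo2}.
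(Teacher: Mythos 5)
Your proof is correct, but the middle of the argument runs along a different track than the paper's. The paper derives the corollary in two ways: first as an immediate consequence of Theorem \ref{teo2} (reading off the case where both frame constants equal $1$), and second by mimicking Olofsson's computation, i.e.\ expanding $\langle n^{-s}S\varphi, m^{-s}S\varphi\rangle = \int_{\mathbb{T}^{\infty}} |f(w)|^2 w^{\alpha-\beta}\,dw$ and invoking uniqueness of Fourier coefficients to conclude that orthonormality of $\{S\varphi_n\}_n$ is equivalent to $|f|=1$ a.e. You instead reformulate orthonormality as $M_f$ being an isometry of $H^2(\mathbb{T}^{\infty})$ and characterize the isometric multipliers by combining the test $\|M_f 1\|^2=\int|f|^2=1$ with the identification $\|M_f\|=\|f\|_{H^{\infty}(\mathbb{T}^{\infty})}=\operatorname{ess\,sup}|f|$ from Theorem \ref{Multipliers} and Corollary \ref{3settings}, so that $|f|\le 1$ a.e.\ and $\int(1-|f|^2)=0$ force $|f|=1$ a.e. This is a clean alternative: it avoids both the polarization/Fourier-coefficient computation and the harder ``bounded below implies $|f|\ge\varepsilon$ a.e.'' step from the proof of Theorem \ref{teo2} (the approximation of $\chi_N$ by shifted polynomials), at the price of leaning on the multiplier-norm theorem. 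The final transfer from $|f|=1$ a.e.\ on $\mathbb{T}^{\infty}$ to $|D^{\gamma}(it)|=1$ for almost all $(\gamma,t)$ via \eqref{relaciont0}, measure preservation of the Kronecker flow, and Fubini is exactly the mechanism the paper uses at the end of the proof of Theorem \ref{teo2}, and you apply it correctly. All steps check out.
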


We mention that this corollary can also be proved by mimicking the proof of \cite[Proposition 5.1]{Olo10}: first we compute
\begin{align*}
 \langle n^{-s}S{\varphi}, m^{-s}S{\varphi} \rangle &= \int\limits_{\mathbb{T}^\infty} w^{\alpha} \mathfrak{B}^{-1}( S\varphi)(w) \overline{w^{\beta} \mathfrak{B}^{-1}( S\varphi)(w)} dw \\
 &= \int\limits_{\mathbb{T}^\infty} |\mathfrak{B}^{-1}( S\varphi)(w)|^2 w^{\alpha - \beta}  dw.
\end{align*}
This shows that the sequence $\{n^{-s} S{\varphi}\}_n$ is orthonormal if and only if $|\mathfrak{B}^{-1}( S\varphi)(w)|= |f(w)|=1$ for almost all $w \in \mathbb{T^{\infty}}$. Here we use the uniqueness of the Fourier coefficients. Now observe that
\[
n^{-s}S\varphi = S \varphi n^{-s} = S \varphi S e_n = S\circ T_{\varphi}(e_n) = S\varphi_n.
\]
Since $S$ is an isometric isomorphism, the previous equivalent conditions are also equivalent to   $\{\varphi_n\}_n$ being an orthonormal sequence, which gives Corollary \ref{coro}.

We conclude this section with a reformulation of a theorem from \cite{Olo10} in terms of orthonormal systems in $L^2(0,1)$. In order to state these results, we need to recall some definitions. By an  $\mathcal{H}^2-$inner function we mean a Dirichlet series $D$ in $\mathcal{H}^{\infty}$ such that  $|\mathfrak{B}^{-1}(D) (w) |= 1 $ a.e. $ w \in \mathbb{T^{\infty}} .$  For any $ n \in \N$ we have an operator $\Lambda(n)$ acting in $\Ha^2$ defined by
\[
\Lambda(n)f(s) = n^{-s} f(s), \quad \ s \in \C_{1/2},
\]
for $f \in \Ha^2$.

For the rest of this section,  $\mathcal{I}$ is a shift invariant subspace of $\mathcal{H}^2$ (i.e. $n^{-s} \mathcal{I} \subseteq \mathcal{I}$ for all $n \in \N$ ) such that
\begin{equation}\label{5.1}
\Lambda(p)|_{\mathcal{I}} \left( \Lambda(q) |_{\mathcal{I}}\right)^* =\left(\Lambda (q)|_{\mathcal{I}}\right) ^* \Lambda(p) |_{\mathcal{I}}
\end{equation}
for all primes $p, q$ with $ p \neq q$.  This  property  is Condition 2 in \cite{Olo10}. Now we can state the following result.

\begin{theorem} \normalfont{\cite{Olo10}} \label{teolof}
	Let $\mathcal{I}$ be a nonzero subspace of $\mathcal{H}^2$ which is  shift invariant. Then  $\mathcal{I}$ has the property \eqref{5.1} for all primes $p$ and $q$ with $p \neq q$ is and only if  $\mathcal{I}$ is of the form
	\begin{equation}\label{phi}
		\mathcal{I} = D \mathcal{H}^2
	\end{equation}
for some  $\mathcal{H}^2-$inner function $D$.  Furthermore, the function $D$ is uniquely determined up to unimodular constant.
\end{theorem}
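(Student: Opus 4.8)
The plan is to transport everything to $H^2(\mathbb{T}^\infty)$ through the Bohr transform and Corollary~\ref{3settings}, where the statement becomes a Beurling--Lax theorem for doubly commuting invariant subspaces. Under the identification $\mathcal{H}^2=H^2(\mathbb{T}^\infty)$ the operator $\Lambda(p_j)$ becomes multiplication $M_{z_j}$ by the $j$-th coordinate; shift invariance ($n^{-s}\mathcal{I}\subseteq\mathcal{I}$ for all $n$) becomes invariance of $\mathcal{I}$ under every $M_{z_j}$, since the primes generate $\N$ multiplicatively; and \eqref{5.1} becomes exactly the \emph{doubly commuting} relation $R_jR_k^{\ast}=R_k^{\ast}R_j$ ($j\neq k$) for the restricted shifts $R_j:=M_{z_j}|_{\mathcal{I}}$. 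Moreover an $\mathcal{H}^2$-inner function corresponds to a $D\in H^\infty(\mathbb{T}^\infty)$ with $|D|=1$ a.e., and $\mathcal{I}=D\mathcal{H}^2$ reads as $\mathcal{I}=D\,H^2(\mathbb{T}^\infty)$. Thus it suffices to characterize the nonzero closed $\mathcal{I}\subseteq H^2(\mathbb{T}^\infty)$ that are invariant under all $M_{z_j}$ and doubly commuting as those of the form $D\,H^2(\mathbb{T}^\infty)$ with $D$ inner.

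For the direct implication, if $\mathcal{I}=D\,H^2(\mathbb{T}^\infty)$ with $D$ inner then $V:=M_D$ is a unitary of $H^2(\mathbb{T}^\infty)$ onto $\mathcal{I}$ (isometric because $|D|=1$, and onto because its range is the closed space $D\,H^2(\mathbb{T}^\infty)$), and $VM_{z_j}=M_{z_j}V$ as multiplication operators commute. Hence $R_j=VM_{z_j}V^{\ast}$ and $R_j^{\ast}=VM_{z_j}^{\ast}V^{\ast}$ on $\mathcal{I}$, so \eqref{5.1} is conjugate to the identity $M_{z_j}M_{z_k}^{\ast}=M_{z_k}^{\ast}M_{z_j}$ on the full space, which is immediate on monomials (the backward shift in one variable ignores a forward shift in another).

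For the converse I would run the joint wandering subspace argument. Each $R_j$ is an isometry which is \emph{pure}, since $\bigcap_n R_j^{\,n}\mathcal{I}\subseteq\bigcap_n z_j^{\,n}H^2(\mathbb{T}^\infty)=\{0\}$. Put
\[
\mathcal{W}=\bigcap_j\ker R_j^{\ast}=\bigcap_j\big(\mathcal{I}\ominus z_j\mathcal{I}\big).
\]
Double commuting gives, first, that the spaces $\{z^{\alpha}\mathcal{W}\}_{\alpha}$ are pairwise orthogonal: writing $\gamma=\alpha\wedge\beta$ for the coordinatewise minimum and cancelling it (the $M_{z_k}$ are isometries) reduces $\langle z^{\alpha}w,z^{\beta}w'\rangle$ to a coordinate $j$ with $(\alpha-\gamma)_j\geq 1$ and $(\beta-\gamma)_j=0$, where $R_j^{\ast}$ slides past the remaining variables and annihilates $w'\in\mathcal{W}$. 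Second, it gives $\mathcal{I}=\bigoplus_{\alpha}z^{\alpha}\mathcal{W}$: the subspace $\ker R_1^{\ast}$ reduces every $R_j$ with $j\geq2$, so a one-variable Wold decomposition in $z_1$ plus induction yields the decomposition for finitely many variables, which is then promoted to all variables in the limit.

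Granting the decomposition, scalar-valuedness forces $\dim\mathcal{W}=1$: for an orthonormal family $\{D_i\}\subseteq\mathcal{W}$ the orthonormality of $\{z^{\alpha}D_i\}$ means $D_i\overline{D_j}=\delta_{ij}$ a.e., which is impossible for $i\neq j$ since $|D_i|=1$ a.e.; the same computation shows a unit vector $D$ spanning $\mathcal{W}$ satisfies $|D|=1$ a.e., i.e.\ $D$ is inner, whence $\mathcal{I}=\bigoplus_{\alpha}z^{\alpha}\,\mathbb{C}D=D\,H^2(\mathbb{T}^\infty)$. Uniqueness follows because $D_1H^2=D_2H^2$ forces $D_1/D_2$ and its reciprocal into $H^\infty(\mathbb{T}^\infty)$, so the Fourier coefficients of $D_1/D_2$ are supported in both $\{\alpha\geq0\}$ and $\{\alpha\leq0\}$, leaving only a unimodular constant. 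The main obstacle is the completeness $\mathcal{I}=\bigoplus_\alpha z^\alpha\mathcal{W}$ in infinitely many variables: the variable-by-variable induction never terminates, and a crude truncation lets the $\ell^2$ mass escape to higher multi-indices, so the real point is to prove that a nonzero doubly commuting shift invariant subspace has nonzero joint wandering subspace---precisely the content of the Wold decomposition for a countable family of doubly commuting isometries, where the doubly commuting hypothesis is indispensable.
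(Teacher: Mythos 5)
This theorem is quoted in the paper from \cite{Olo10} without proof, so there is no internal argument to compare against; your proposal has to be judged on its own. Your translation of the statement into $H^2(\mathbb{T}^\infty)$ is correct in every respect ($\Lambda(p_j)\mapsto M_{z_j}$, shift invariance $\mapsto$ invariance under all coordinate shifts since the primes generate $\N$ multiplicatively, \eqref{5.1} $\mapsto$ double commutativity of the restricted shifts, $\mathcal{H}^2$-inner $\mapsto$ unimodular boundary values); the forward implication is complete; and the pairwise orthogonality of $\{z^{\alpha}\mathcal{W}\}_{\alpha}$, the purity of each $R_j$, the one-dimensionality of $\mathcal{W}$, and the uniqueness up to a unimodular constant are all argued soundly.

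The genuine gap is the one you flag yourself: the completeness $\mathcal{I}=\bigoplus_{\alpha}z^{\alpha}\mathcal{W}$ (and with it $\mathcal{W}\neq\{0\}$). Slocinski's theorem gives $\mathcal{I}=\bigoplus_{\alpha\in\N_0^n}R^{\alpha}\mathcal{W}_n$ with $\mathcal{W}_n=\bigcap_{j\le n}\ker R_j^{\ast}$ for each finite $n$, but the index set grows with $n$, so one cannot pass to the limit termwise, and a priori the decreasing sequence $\mathcal{W}_n$ could have trivial intersection; declaring the remaining step to be ``the Wold decomposition for a countable family of doubly commuting isometries'' restates the problem rather than solving it. The missing input is specific to this setting: every Fourier exponent of an element of $H^2(\mathbb{T}^\infty)$ is a \emph{finitely supported} multi-index, whence
\[
\bigcap_{N}\overline{\operatorname{span}}\{z_j\mathcal{I}\colon j>N\}\subseteq\bigcap_{N}\overline{\operatorname{span}}\{z_jH^2(\mathbb{T}^\infty)\colon j>N\}=\{0\},
\]
so the commuting projections $\prod_{j>N}(I-R_jR_j^{\ast})$ increase strongly to the identity of $\mathcal{I}$ as $N\to\infty$. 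Feeding this into the joint spectral measure determined by the commuting resolutions of identity $\{R_j^{\,m}(I-R_jR_j^{\ast})(R_j^{\ast})^{m}\}_{m\ge0}$ shows that all the mass sits on finitely supported multi-indices, which is precisely the statement that $\bigoplus_{\alpha}R^{\alpha}\mathcal{W}$ exhausts $\mathcal{I}$ (and in particular that $\mathcal{W}\neq\{0\}$). Without an argument of this kind the converse direction is incomplete.
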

Our results allows us to give a reformulation this theorem. Given $D$ as in the theorem, set  $\varphi = S^{-1} D $. Combining Theorem \ref{teolof} with Corollary \ref{coro} we get that $\{\varphi_n\}_n$ is an orthonormal sequence. Thanks to the relationship between $T_{\varphi}$ and $S$ we deduce that equality \eqref{phi} turns into
\[
\mathcal{I} =  S\varphi S (L^2(0,1)) = S T_{\varphi}(L^2(0,1)).
\]
From this we conclude $S^{-1}(\mathcal{I})= \overline{[\varphi_n]}$.  Since this steps can be followed backwards then Theorem \ref{teolof} can be restated as follows.

\begin{theorem}
	Let $\mathcal{I}$ be a nonzero subspace of $\mathcal{H}^2$ which is  shift invariant. Then  $\mathcal{I}$  has the property \eqref{5.1} if and only if there exists $\varphi \in L^{2}(0,1)$ such that the associated system $\varphi_n$ is an orthonormal sequence and
	\[
	S^{-1}(\mathcal{I}) = \overline{[\varphi_n]}.
	\]
\end{theorem}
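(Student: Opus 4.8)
The plan is to run the argument that precedes the statement in both directions, using Theorem~\ref{teolof} to identify the shift invariant subspaces enjoying property \eqref{5.1} and Corollary~\ref{coro} to translate the inner condition on $D$ into orthonormality of $\{\varphi_n\}_n$. The bridge between the two formulations is the identity
\[
S\big(\overline{[\varphi_n]}\big) = \overline{\operatorname{span}\{n^{-s}D : n \in \N\}} = D\,\mathcal{H}^2,
\]
valid whenever $D = S\varphi$ is $\mathcal{H}^2$-inner. I would establish this first, and then both implications are immediate.

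For the forward implication, suppose $\mathcal{I}$ has property \eqref{5.1}. By Theorem~\ref{teolof}, $\mathcal{I} = D\,\mathcal{H}^2$ for some $\mathcal{H}^2$-inner function $D$. Set $\varphi = S^{-1}D$, so that $D = S\varphi$ and $f = \mathfrak{B}^{-1}(D)$ satisfies $|f(w)| = 1$ for a.e.\ $w \in \mathbb{T}^\infty$ by the definition of inner function; Corollary~\ref{coro} then gives that $\{\varphi_n\}_n$ is an orthonormal sequence. Since $S\varphi_n = n^{-s}S\varphi = n^{-s}D$, the bridge identity yields $S(\overline{[\varphi_n]}) = D\,\mathcal{H}^2 = \mathcal{I}$, that is, $S^{-1}(\mathcal{I}) = \overline{[\varphi_n]}$.

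For the converse, assume such a $\varphi$ exists with $\{\varphi_n\}_n$ orthonormal and $S^{-1}(\mathcal{I}) = \overline{[\varphi_n]}$. By Corollary~\ref{coro}, $D = S\varphi$ is $\mathcal{H}^2$-inner, and applying the bridge identity we obtain $\mathcal{I} = S(\overline{[\varphi_n]}) = D\,\mathcal{H}^2$. Theorem~\ref{teolof} then guarantees that $\mathcal{I}$ has property \eqref{5.1}, completing the equivalence.

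The main obstacle is the bridge identity, specifically the claim that $\overline{\operatorname{span}\{n^{-s}D\}} = D\,\mathcal{H}^2$ with the right-hand side already closed. The point is that an $\mathcal{H}^2$-inner function $D$ has $|f| = 1$ a.e.\ on $\mathbb{T}^\infty$, so via Corollary~\ref{3settings} the multiplication operator $M_D = M_{S\varphi}$ acts isometrically on $\mathcal{H}^2$; being an isometry it has closed range and commutes with taking closures. Since the Dirichlet polynomials $\operatorname{span}\{n^{-s}\}$ are dense in $\mathcal{H}^2$, this gives
\[
\overline{\operatorname{span}\{n^{-s}D\}} = M_D\big(\overline{\operatorname{span}\{n^{-s}\}}\big) = M_D(\mathcal{H}^2) = D\,\mathcal{H}^2,
\]
and combined with the conjugation $S\,T_\varphi = M_{S\varphi}\,S$ recorded in \eqref{eq-TfiM-conj} and the isometry of $S$, this is exactly the desired identity.
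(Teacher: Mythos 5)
Your proposal is correct and follows essentially the same route as the paper: invoke Theorem~\ref{teolof} to write $\mathcal{I}=D\,\mathcal{H}^2$ with $D$ an $\mathcal{H}^2$-inner function, use Corollary~\ref{coro} to translate the inner condition into orthonormality of $\{\varphi_n\}_n$, and identify $S\bigl(\overline{[\varphi_n]}\bigr)$ with $D\,\mathcal{H}^2$ via the conjugation $S\circ T_\varphi=M_{S\varphi}\circ S$. Your explicit justification that $D\,\mathcal{H}^2$ is already closed (because $M_D$ is an isometry when $|f|=1$ a.e.\ on $\mathbb{T}^\infty$) is a point the paper passes over silently, so your write-up is, if anything, slightly more complete.
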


\section{Examples}\label{SExamples}
In this section we present examples and applications of the obtained results. More precisely, we show examples of $\varphi$ that give systems satisfying only one of the frame bounds, as well as systems which are Riesz or orthonormal sequences (or bases), etc.  In particular, we show nontrivial orthonormal sequences of dilations in  $L^2 (0,1)$ which are not block sequences of  $\{\sqrt{2}\sin(nx)\}_n$.

As usual, we write  $\varphi \in L^{2}(0,1)$ for the function that generates the system of dilations $\{\varphi_n\}$ and then set  $D= S\varphi$ and $f=\mathcal{B}^{-1}(D)$.

We start by recalling Example~\ref{ex-nobdedbelow}: the function $\varphi(x)= e_2(x)=\sqrt{2}\sin(2\pi x) $, for which we have $f(z)=z_1 \in H^{\infty}(\mathbb{T}^\infty)$ and   $D:=1/2^s$. The system $\{\varphi_n\}_n$ is clearly an orthonormal sequence (and then a Riesz sequence) but $D$ is not bounded below. On the other hand, it is easy to check that any of the conditions of Corollary  \ref{coro} are satisfied.

Now we present the argument that is the basis of most of our next examples.
Recall that every continuous function with no zeros in a compact set $X$, is bounded from below in that set.
If $f$ is a  polynomial in $N$ variables, then it clearly belongs to $H^{\infty}(\mathbb{D}^N) \subset H^{\infty}(B_{c_0})$.
If $f$ has no  zeros in $\mathbb{T}^N$, by compactness  $|f|$ is bounded below on $\mathbb{T}^N $ and by Theorem \ref{teo2} the corresponding system $\{\varphi_n\}$ turns out to be a Riesz sequence. If in addition $f$ has no zeros in  $\overline{\mathbb{D}}^{N}$, the multiplicative inverse $\frac{1}{f}$ belongs to $H^{\infty}(\mathbb{D}^N)$ and, therefore, $S\varphi$ y $1/S\varphi$ belong to $\mathcal{M}$. Applying Theorem  $\ref{teoseip}$ we conclude that  $\{\varphi_n\}_n$ is a Riesz basis.

We first consider $ \varphi (x)= c_1 c_2 \sqrt{2}\sin(x)- c_2 \sqrt{2}\sin(2x) - c_1 \sqrt{2}\sin(3x) + \sqrt{2}\sin(6x)$. Thus, the associated holomorphic function is
	$f(z)= (z_1 - c_1)(z_2 - c_2,)$ and the Dirichlet series  $$D(s)= c_1 c_2 - c_2 2 ^{-s} - c_1 3^{-s} + 6^{-s}.$$

In this and the general case in which 	$f(z)= (z_1 - c_1)(z_2 - c_2)\ldots (z_N - c_N)$, the associated system  $\{\varphi_n\}_n$ is a Riesz sequence if and only if $|c_i| \neq 1$  for all $i$. It is a  Riesz basis if and only if $|c_i | > 1$ for all $i$. Finally, the system is a Bessel sequence not satisfying the lower frame bound if and only if   $|c_i | \ge 1$ for all $i$ and there exists some $j$ with $|c_j | = 1$.

Consider now the $N-$variable polynomial
	\[
	f(z)= z_1 \ldots z_N - c .
	\]
The same argument shows that, for the corresponding function $\varphi$, $\{\varphi_n\}$ is a Riesz sequence if and only if $|c|\neq 1$, and a Riesz basis if and only if $|c| >  1$.

In  \cite{BOLU01}, the authors present conditions for an $N-$variable polynomial to have no zeros in $\mathbb{T}^N$ or $\mathbb{D}^N$. In order to apply their results, we need to introduce some notation. Consider a polynomial $f: \C^N \rightarrow \C$ given in Taylor expansion at the origin
\begin{equation}
f(z) = \sum _{k \in \mathbf{K}} b_k z^k, \qquad \mathbf{K} \subset \mathbb{N}^N_0 \quad y  \quad b_k \in \mathbb{C} \setminus \{0\}.
\end{equation}
The set $\mathbf{K}$ is the degree set and $V(\mathbf{K})$ the set of vertices which generates the convex hull of $\mathbf{K}$.

Let us consider a Dirichlet polynomial $D(s) = \sum_{n=1}^{6} a_n n^{-s}$.
and its associated polynomial through Bohr's transform
$$f (z) = a_1 + a_2 z_1 + a_3 z_2 + a_4 {z_1}^2 + a_5 z_3 + a_6 z_1 z_2.$$
We assume that  the coefficients $a_n$ are not zero, in which  case the degree set is
$$
\mathbf{K}(f)= \{ (0,0,0),\,(1,0,0),\, (0,1,0),\, (0,0,1),\ (2,0,0),\,(1,1,0)\}.
$$
Then, except for $(0,0,1)$, every other element of $\mathbf{K}(f)$ is in the $xy-$plane and they are distributed as shown in the following figure.

\begin{equation*}
\begin{tikzpicture}[baseline=(current  bounding  box.center)]
......	\draw[latex-latex, thin, draw=gray] (-1,0)--(3,0) node [right] {$x$}; % l'axe des abscisses
\draw[latex-latex, thin, draw=gray] (0,-1)--(0,2) node [above] {$y$}; % l'axe des ordonnées
% l'axe des abscisses

\foreach \Point in {(0,0), (0,1), (2,0), (1,1)}{
	\node at \Point {\textbullet};
	\draw   (0,1) -- (0,0)
	(0,0) -- (2,0)
	(0,1) -- (1,1)
	(1,1) -- (2,0);
}

% to ensure that the points are being properly centered:

\node [red] at (1,0) {\textbullet};;

\end{tikzpicture}
\end{equation*}

From this, it is immediate to see that  $(0,1,0)$ is not in $V(\mathbf{K})$ (indeed  we only need $a_1$, $a_2$ and $a_4$ to be nonzero).
Therefore, if
\begin{equation}\label{condBOLU}
|a_2| > \sum_{n \neq 2} a_n.
\end{equation}
the hypotheses of \cite[Corollary 1]{BOLU01} are satisfied. Then $f$  has no zeros in $\mathbb{T}^{\infty}$, and therefore, $\{\varphi_n\}$ is a Riesz sequence.

The previous example could be easily generalized to any Dirichlet polynomial $D(s) = \sum_{n=1}^{N} a_n n^{-s}$ which satisfies  \eqref{condBOLU} and such that $a_1$, $a_2$ and $a_4$ are nonzero, since in this case  the index in correspondence with $a_2$ is never a vertex (regardless of the number of variables).

We now consider a Dirichlet polynomial $D(s) = \sum_{n=1}^N a_n n^{-s}$ with $a_1  \neq 0$  and such that
\begin{equation}\label{condBOLU2}
|a_n| \le \left( 1 - \frac{1}{\left(1 + |a_1|\right)^{\frac{1}{\pi(N)}}}\right)^{|\alpha(n)|} \quad \text{for all} \quad n=1, \ldots, N.
\end{equation}
Here $\pi(N)$ is the well known prime-counting function, the function that counts the number of prime numbers less than or equal to some real number $N$, and $\alpha(n)$ is defined by the equation \eqref{alphan}. Note that $f$, the Bohr lift of $D$, is  a polynomial  in $\pi(N)$ many complex variables. Putting  \eqref{condBOLU2} together with \cite[Theorem 5]{BOLU01} we get that the system $\{\varphi_n\}$ is a Riesz basis for $L^2 (0,1)$.

We now leave the polynomial world and take $f$  as the product of $d$ Moebius transforms in different variables: given  $c_i \in \C \setminus\{0\}$ with $|c_i| < 1$  for all $i=1, \ldots
d$,  we define
$$f(z)= \frac{z_1-c_1}{1- \overline{c_1}z_1 } \ldots \frac{z_d-c_d}{1- \overline{c_d}z_d }.$$
Then, $f$ is a bounded  holomorphic  function with has modulus 1 in the polytorus. By Theorem \ref{teo2} we get that, for the corresponding $\varphi$, the sequence $\{\varphi_n\}$ is orthonormal in $L^2 (0,1)$.

In the case $d=1$,  it is easy to
rewrite the Moebius transform  to get
$$
f(z_1) =  \frac{z_1-c_1}{1- \overline{c_1}z_1 } = (z_1 - c_1) \sum_{m=0} \overline{c_1}^m z^m_1.
$$
Hence,  its associated mappings are
$$
D(s) = \overline{c_1} + \sum_{m=1}^{\infty} ( \overline{c_1}^{m-1} + c_1 \overline{c_1}^m) (2^m)^{-s}.
$$
and
\begin{equation}\label{ex-nosubsequence}
 	\varphi (x) = \overline{c_1}\sqrt{2}\sin(x) + \sum_{m=1}^\infty ( \overline{c_1}^{m-1} + c_1 \overline{c_1}^m) \sqrt{2} \sin(2^{m}x).
\end{equation}

\begin{example}
 Let  $\phi$ be as in \eqref{ex-nosubsequence} with $0<|c_1|< 1$. Then,   the system $\{\varphi_n\}_n$ is an orthonormal sequence of $L^2(0,1)$ which is not a block sequence $\{\sqrt{2}\sin(nx)\}_n$.
\end{example}

As a last example we construct a system $\{\varphi_n\}_n$ which satisfies the lower frame condition \eqref{frame} but not the upper one (i.e., it is not a Bessel sequence). By Theorem \ref{teo1} we need a function $\varphi \in L^{2}(0,1)$ such that  $D$ does not belong to $\Ha^{\infty}$ but $1/D$ belongs to $\Ha^{\infty}$.  To achieve this, we are going to work in the classical Hardy space on the disk $\D$. We are looking for a function $F \in H^{2}(\D)$ which is not in $H^{\infty}(\D)$ such that  $\frac{1}{F}$ belongs to $H^{\infty}(\D)$.  To do this we show that the outer function
\begin{equation}\label{Fext}
F(z) = \exp  \left( \frac{1}{2\pi}\int\limits_{-\pi}^{\pi} \frac{e^{i \theta} + z}{e^{i\theta}-z} \log|f(e^{i\theta})|d\theta \right),
\end{equation}
associated to $f(e^{i\theta}) = |\theta|^{-\frac{1}{3}}$  satisfies the requirements needed. By \cite[Theorem 17.16]{RU87} we get that $\lim\limits_{r \to 1^{-}} |F(re^{i\theta})|= f(e^{i\theta})$ a.e. in $\T$ and that $F$ belongs to $H^p(\D)$ if and only if $f$ belongs to $L^{p}(\T)$ $1\le p\le \infty$.  Since $f$ is in $L^2(\T)$  but not in $L^{\infty}(\T)$ we have that $F$ is in $L^2(\T)$ but not in $H^{\infty}(\D)$. A close look at \eqref{Fext} tells us that $\frac{1}{F}$ is the outer function associated to $\frac{1}{f}= |\theta|^{1/3}$ , which belongs to $L^{\infty}(\T)$. Consequently, applying \cite[Theorem 17.16]{RU87}  once again we get that $\frac{1}{F}$ belongs to  $H^{\infty}(\D)$. Thus, $ F $ fulfills the desired properties.

\section{Multivariate case}\label{SMultivariateCase}

The goal of this section is to study Riesz sequences,  frames and orthonormal bases in $ L ^ 2 ((0,1)\times (0,1)) $ of the form
\[
\varphi_{m,n} (x,y)= \varphi(mx,ny).
\]
The idea is to replicate the structure of Theorems \ref{teo1} and \ref{teo2} for $\varphi_{m,n}$. To do this, we need  the corresponding results of multiple Dirichlet series that we  develop next.
%In this section we are going to transfer our questions to the context of multiple Dirichlet series.  We begin by defining them.

Now we transfer our questions to the context of multiple Dirichlet series.  We begin by defining them.

\begin{definition}
	An ordinary  \textbf{$k-$multiple Dirichlet series} is a series of the form
		\begin{equation*}
		 \sum_{m_1, \ldots, m_k =1}^{\infty} \frac{a_{m_1, \ldots, m_k}}{m_1^{s_1} , \ldots, m_{k}^{s_k}}
		\end{equation*}
where the coefficients $a_{m_1, \ldots, m_k}$ belong to $\C$ and $s_1, \ldots, s_k$ are complex variables.
\end{definition}

For a complete introduction and more information on the topic we refer to the Castillo Medina Ph.D. Thesis \cite{JC19} and the references therein.

 For the sake of simplicity,  we work with double Dirichlet series. The extension to the general case is straightforward. Let  $I^2$ denote the set $(0,1)\times (0,1)$. Given a function $\varphi \in L^2 (I^2)$ we consider its dilations in both variables
\[
\varphi_{m,n} (x,y)= \varphi(mx,ny).
\]
Recall that  $L^2 (I^2)$ has a basis consisting of the functions
\[
e_{m,n} (x,y) = e_m (x) e_n (y)=\sin(mx)\sin(nx) .
\]
This leads us to consider the operator $S$, analogous to the one variable case, which maps the basis $e_{m,n}$ to $m^{-s} n^{-t}$.  In the same way we define
\[
T_{\varphi} f(x,y) = \sum_{n} \langle f, e_{m,n} \rangle \, \varphi_{m,n} (x,y).
\]
Our first goal is to extend Theorem \ref{teo1} to double Dirichlet series.  Repeating the arguments used for standard Dirichlet series  we see that the equality
$$
T_{\varphi} = S^{-1} \circ M_{S\varphi}\circ S.
$$
remains valid.  We also need to define the Hardy space of double Dirichlet series
\[
\Ha^2_2 = \left\{ \sum a_{m,n} m^{-s}n^{-t}: \sum_{m,n}|a_{m,n}|^2< \infty  \right\}.
\]
By the Cauchy-Schwarz inequality, just as in the one variable case, every Dirichlet series in $\Ha^2_2$ converges absolutely on $ \C_{\frac{1}{2}}\times \C_{\frac{1}{2}}$.
Moreover, for each $\varepsilon>0$ we set  $$C_{\varepsilon}= \sum_m \frac{1}{m^{1 + 2 \re s}}  \sum_n \frac{1}{m^{1 + 2 \re t}}.$$ Using Cauchy-Schwarz inequality again we get
\[
\sup\limits_{(s,t)  \, \in \, \C_{1/2 +\varepsilon} \times \C_{ 1/2 +\varepsilon} } | D(s,t)| \le C_{\varepsilon} \norm{D}_{\Ha^2_2}.
\]
This means that  $\Ha^2_2$ is continuously contained in $C_{b}
\big(\C_{\frac{1}{2} +\varepsilon} \times \C_{\frac{1}{2} +\varepsilon}\big)$, the space of continuous and bounded functions on $\C_{\frac{1}{2} +\varepsilon} \times \C_{\frac{1}{2} +\varepsilon }$.

Following the previous notation, we also write $\mathcal{M}_2$ for the set of all multipliers of $\Ha^2_2$, which turns out to be a space of double Dirichlet series. Each double Dirichlet series in $\mathcal{M}_2$ defines a bounded multiplication operator $M_D$ on  $\Ha^2_2$.

With  these definitions and following exactly the same proof we are able to extend Theorem \ref{teo1} as follows.

\begin{theorem}\label{teo1-doble} Let $\varphi \in L^2 \left( {I^2}\right)$ and $\varphi_{m,n} (x) = \varphi(mx, ny)$. Then the following assertions hold:
	\begin{itemize}
		\item The system $\{\varphi_{m,n}\}$ is a Bessel sequence if and only if $S\varphi \in \mathcal{M}_2 $.
		\item The system $\{\varphi_{m,n}\}$ satisfies the lower frame bound  if and only if $1/S\varphi \in \mathcal{M}_2 $.

\end{itemize}
\end{theorem}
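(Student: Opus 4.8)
The plan is to transcribe the proof of Theorem~\ref{teo1} almost verbatim, replacing $\Ha^2$ by $\Ha^2_2$, ordinary Dirichlet series by double ones, the half-plane $\C_{1/2}$ by the product domain $\C_{1/2}\times\C_{1/2}$, and $\mathcal{M}$ by $\mathcal{M}_2$. The backbone is the conjugation identity $T_{\varphi}=S^{-1}\circ M_{S\varphi}\circ S$, which the discussion above already records as valid in the double setting; through it, every assertion about $T_{\varphi}$ becomes an assertion about $M_{S\varphi}$ acting on $\Ha^2_2$.

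For the first bullet I would argue exactly as before. The two-variable analogue of Remark~\ref{obsC*} shows that $\{\varphi_{m,n}\}$ is a Bessel sequence if and only if $T_{\varphi}$ is well defined and bounded; via the conjugation this is equivalent to $M_{S\varphi}$ being well defined and bounded on $\Ha^2_2$, which is precisely the statement $S\varphi\in\mathcal{M}_2$. As in the one-variable case, the optimal Bessel constant equals $\norm{M_{S\varphi}}$.

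For the second bullet I would run the unbounded-operator machinery. First view $M_{S\varphi}$ as the multiplication operator on its natural domain $\Dom(M_{S\varphi})=\{D\in\Ha^2_2 : S\varphi * D\in\Ha^2_2\}$; this domain contains every finite double Dirichlet polynomial (since $S\varphi\in\Ha^2_2$ and convolution with a finite polynomial is a finite sum of shifts of $S\varphi$), and those are dense, so $M_{S\varphi}$ is densely defined. Next I would prove that $M_{S\varphi}$ is closed by the same pointwise-limit argument: $\Ha^2_2$-convergence forces pointwise convergence on $\C_{1/2+\varepsilon}\times\C_{1/2+\varepsilon}$ via the embedding established above, so if $D_N\to D$ and $M_{S\varphi}(D_N)\to\widetilde D$, then $S\varphi\,D_N\to S\varphi\,D$ pointwise and hence $S\varphi\,D=\widetilde D$. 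Setting $Q=S^{-1}\circ M_{S\varphi}\circ S$, I would identify its adjoint with $R(f)=\sum_{m,n}\langle f,\varphi_{m,n}\rangle e_{m,n}$ on the domain where this sum is square-summable, by the same inner-product computation, now using the double convolution $(S\varphi * D)_{m,n}=\sum_{kk'=m,\ \ell\ell'=n}a_{k,\ell}\,c_{k',\ell'}$ and the orthogonality of $\{e_{m,n}\}$. The lower frame bound then says exactly that $Q^{*}=R$ is bounded below, and the closed-range theorem for closed densely defined operators gives that this holds if and only if $Q$ (equivalently $M_{S\varphi}$) is surjective. Finally, surjectivity is equivalent to $1/S\varphi\in\mathcal{M}_2$: if $M_{S\varphi}$ is onto, solving $S\varphi * E=1$ produces the formal inverse $1/S\varphi$, and then $1/S\varphi * D=E\in\Ha^2_2$ for every $D$; conversely, if $1/S\varphi\in\mathcal{M}_2$, then $M_{S\varphi}\big((1/S\varphi)*E\big)=E$ for all $E$.

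The genuinely dimension-dependent points, and the only places where care is needed, are the closedness step and the adjoint computation, both resting on double-series analogues that the preliminaries already furnish: the continuous embedding $\Ha^2_2\hookrightarrow C_b\big(\C_{1/2+\varepsilon}\times\C_{1/2+\varepsilon}\big)$, which yields the required pointwise convergence, and the multiplicative structure of the double Dirichlet convolution. The abstract functional-analytic input, namely the closed-range theorem for unbounded closed densely defined operators, is insensitive to the number of variables, so no new difficulty arises there. As in the scalar case, the main subtlety remains that $M_{S\varphi}$ need not be bounded, which is exactly why the lower-frame half must be handled through this closed-operator framework rather than by a direct norm estimate.
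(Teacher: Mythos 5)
Your proposal is correct and follows essentially the same route as the paper, which simply states that Theorem~\ref{teo1-doble} is obtained ``following exactly the same proof'' as Theorem~\ref{teo1}; you have supplied precisely that transcription, and the points you flag as needing care (the embedding $\Ha^2_2\hookrightarrow C_b\big(\C_{1/2+\varepsilon}\times\C_{1/2+\varepsilon}\big)$ for closedness, and the double Dirichlet convolution in the adjoint computation) are exactly the analogues the paper sets up in Section~\ref{SMultivariateCase} for this purpose. The only cosmetic difference is that you establish density of $\Dom(M_{S\varphi})$ via finite double Dirichlet polynomials rather than via the inclusion of the multiplier algebra, which is an equally valid (arguably simpler) choice.
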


The above theorem describes some properties of the system $\{\varphi_{m,n}\}$ in terms of the space $\mathcal{M}_2$, so we look for a characterization of this space analogous to Theorem \ref{Multipliers}.
Let us start by analyzing if an analogue to Corollary \ref{3settings} is   valid for multiple Dirichlet series.

 In  \cite{CaGaMa19}, Castillo, García and Maestre proved  that the spaces $\mathcal{H}^{\infty}(\C_{0}^k)$, $k \in \mathbb{N}$, are all isometrically isomorphic independently of their dimension. Indeed, they showed that these spaces are all isometrically isomorphic to the space $H^{\infty}(B_{c_0})$. As a key step they extended Theorem \ref{H inf DirSer} and showed in \cite[Theorem 3.5]{CaGaMa19} that the multiple Bohr transform given by
\begin{align*}
	\mathcal{B}: H^{\infty}(B_{c^{k}_0}) &\xrightarrow{\phantom{ a_{p^{\alpha}}=c_{\alpha}}} \Ha^{\infty} (\C^{k}_{0}) \\
	 \sum_{\alpha_j \in \N_0 ^{(\N)}} c_{\alpha_1, \ldots , \alpha_k} z_1^{\alpha_1}\ldots z_k^{\alpha_k} &\xrightarrow{\phantom{a_{p^{\alpha}}=c_{\alpha}}} \sum_{m_1, \ldots, m_k =1}^{\infty} \frac{a_{\primos^{\alpha_1}, \ldots, \primos^{\alpha_k}}}{m_1^{s_1} , \ldots, m_{k}^{s_k}},
\end{align*}
is an isometric isomorphism. Since the spaces $c^k_0$ and $c_{0}$ are isometrically isomorphic, the aforementioned isometric isomorphism among the spaces $\mathcal{H}^{\infty}(\C_{0}^k)$ is obtained. Moreover, from \cite[Remark 3.6]{CaGaMa19} we see that given $f\in H^{\infty}(B_{c^{k}_0})$ and taking $D=\mathfrak{B}(f)$
 we have $$D(s_1, s_2, \ldots, s_k) = f \left( \frac{1}{\primos^{s_1}},\frac{1}{\primos^{s_2}},\ldots, \frac{1}{\primos^{s_k}} \right)
 $$
 for every $ (s_1, \ldots, s_k) \in \C^k_{0}$.

What is missing is the Fourier analysis point of view for multiple Dirichlet series. Our goal is to develop this third setting for the multiple case. For the sake of clarity,  we consider again  double Dirichlet series. Accordingly, we start by defining
\begin{equation*}
H^{\infty}(\mathbb{T^{\infty}\times \mathbb{T^{\infty}}}) = \{f \in L^{\infty}(\mathbb{T^{\infty}\times \mathbb{T^{\infty}}}) :  \widehat{f}(\gamma , \beta) \neq 0 \textit{ only if }  \beta_j, \alpha_i \geq 0 \textit{ for all } i, j \}.
\end{equation*}
In this direction, we give a version of  Corollary \ref{3settings} for double Dirichlet series, which complements \cite[Theorem 4.33]{CaGaMa19}.

\begin{proposition}\label{TeoHinfC2}
	The following equalities (as Banach spaces) hold
$$ \mathcal{H}^{\infty}(\C_{0}^2)  = H^{\infty}(B_{c^2_0}) = H^{\infty}(\mathbb{T^{\infty}}\times \mathbb{T^{\infty}}). $$
This is provided by the multiple Bohr transform which identifies Dirichlet, monomial and Fourier coefficients. Also,
$$ \mathcal{H}^{2}(\C_{0}^2)  = H^{2}(B_{c^2_0}) = H^{2}(\mathbb{T^{\infty}}\times \mathbb{T^{\infty}}). $$
\end{proposition}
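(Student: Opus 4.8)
The plan is to obtain the three-fold identification by combining what is already available with a reindexing that reduces the two-variable polytorus to the one-variable one. The equality $\mathcal{H}^{\infty}(\C_{0}^2) = H^{\infty}(B_{c^2_0})$, together with the identification of Dirichlet and monomial coefficients, is exactly the statement that the multiple Bohr transform is an isometric isomorphism, established in \cite[Theorem 3.5]{CaGaMa19} and recalled above. Thus the only genuinely new point is the isometric identification $H^{\infty}(B_{c^2_0}) = H^{\infty}(\mathbb{T^{\infty}}\times\mathbb{T^{\infty}})$, matching monomial coefficients with Fourier coefficients; the monomial coefficients $c_{\alpha,\beta}$ then serve as the common currency linking all three descriptions.

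To produce this identification I would reduce to the single-variable Corollary~\ref{3settings}. Fix any bijection $\tau\colon \N \sqcup \N \to \N$, that is, a way of merging the two families of coordinates into one. It induces three compatible maps: (a) an isometric isomorphism $c^2_0 \cong c_0$ (already used in the text), hence $B_{c^2_0}\cong B_{c_0}$ and an isometric isomorphism $H^{\infty}(B_{c^2_0}) \cong H^{\infty}(B_{c_0})$ that simply relabels the monomial variables; (b) a measure-preserving topological group isomorphism $\Phi\colon \mathbb{T}^{\infty}\times\mathbb{T}^{\infty} \to \mathbb{T}^{\infty}$, whence $g \mapsto g\circ\Phi$ is an isometric isomorphism $L^{\infty}(\mathbb{T}^{\infty}) \to L^{\infty}(\mathbb{T}^{\infty}\times\mathbb{T}^{\infty})$ relabeling Fourier coefficients through the dual bijection; and (c) under this dual bijection the product positive cone $\N_0^{(\N)}\times\N_0^{(\N)}$ corresponds exactly to $\N_0^{(\N)}$, so that the Hardy subspaces correspond, i.e. $H^{\infty}(\mathbb{T}^{\infty}\times\mathbb{T}^{\infty}) \cong H^{\infty}(\mathbb{T}^{\infty})$ isometrically. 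Chaining these with the single-variable equality $H^{\infty}(\mathbb{T}^{\infty}) = H^{\infty}(B_{c_0})$ from Corollary~\ref{3settings} and transporting back along $\tau$ yields $H^{\infty}(\mathbb{T^{\infty}}\times\mathbb{T^{\infty}}) = H^{\infty}(B_{c^2_0})$, with Fourier and monomial coefficients identified.

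For the $H^2$ statement I would argue directly, as in the second assertion of Corollary~\ref{3settings}: the multiple Bohr transform sends the orthonormal basis $\{z^{\alpha}w^{\beta}\}_{(\alpha,\beta)}$ of $H^2(B_{c^2_0})$ to the orthonormal basis $\{m^{-s}n^{-t}\}$ of $\mathcal{H}^2(\C_{0}^2)$, while the characters $\{w^{(\alpha,\beta)}\}_{(\alpha,\beta)\in \N_0^{(\N)}\times\N_0^{(\N)}}$ form an orthonormal basis of $H^2(\mathbb{T}^{\infty}\times\mathbb{T}^{\infty})$. Since in each of the three spaces the norm is the $\ell^2$-norm of the coefficients, the coefficient-identifying maps are isometric isomorphisms, giving $\mathcal{H}^{2}(\C_{0}^2) = H^{2}(B_{c^2_0}) = H^{2}(\mathbb{T^{\infty}}\times\mathbb{T^{\infty}})$.

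The step requiring care is the construction in (b)--(c): one must check that the coordinate-merging map $\Phi$ pushes the product Haar measure to Haar measure (so that composition with $\Phi$ induces isometries of the $L^\infty$, and $L^2$, spaces) and that the induced relabeling of the dual groups carries the product cone bijectively onto the single cone, so that the $H^{\infty}$ (and $H^2$) conditions on the Fourier support translate into one another. One must also verify that this reindexing is compatible with the Bohr bookkeeping, namely that the monomial coefficient $c_{\alpha,\beta}$, the Dirichlet coefficient $a_{\primos^{\alpha},\primos^{\beta}}$, and the Fourier coefficient $\widehat{f}(\alpha,\beta)$ are matched consistently across all the maps. This last point is routine relabeling once the measure-theoretic statement is in place, and everything else is a direct transcription of the single-variable arguments.
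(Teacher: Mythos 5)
Your proposal is correct and follows essentially the same route as the paper: the first equality is quoted from \cite{CaGaMa19}, the second is obtained by merging the two coordinate families via a bijection (the paper uses the explicit interleaving $(\{x_i\},\{y_l\})\mapsto\{x_1,y_1,x_2,y_2,\dots\}$ and its analogue on the torus) to reduce to Corollary~\ref{3settings}, and the $H^2$ case is handled by the same coefficient/orthonormal-basis bookkeeping. The points you flag as ``requiring care'' (Haar measure preservation, correspondence of the positive cones, consistency of the coefficient identifications) are exactly the ones the paper dispatches with its remark that ``a careful analysis of the involved mappings'' identifies monomial with Fourier coefficients.
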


\begin{proof} The first equality is just \cite[Theorem 4.33]{CaGaMa19}
as we have already mentioned, $H^{\infty}(c^2_0) = H^{\infty}(c_0)$  isometrically.  Moreover, the  isometric isomorphism and its inverse are, respectively, the  composition with
\begin{eqnarray*}
% \nonumber % Remove numbering (before each equation)
\Phi:  B_{c_0} \times B_{c_0}\,  &\longrightarrow & B_{c_0} \\
 \left(\{x_i\}_i,\{y_l\}_l \right) &\mapsto& \{x_1,y_1,x_2,y_2,\dots\}
\end{eqnarray*}
and the inverse
\begin{eqnarray*}
% \nonumber % Remove numbering (before each equation)
 \Phi^{-1}:  B_{c_0}  &\longrightarrow & B_{c_0}  \times B_{c_0}\\
  z &\mapsto& \left( \{z_{2i}\}_i, \{z_{2l-1}\}_l \right).
\end{eqnarray*}
Analogous mappings can be defined between $\T^{\infty}\times \T^{\infty} $ and $\T^{\infty}$, which give that  $H^{\infty} (\mathbb{T^{\infty}\times \mathbb{T^{\infty}}}) = H^{\infty} (\mathbb{T}^{\infty})$ isometrically.

A combination of these facts with Corollary \ref{3settings} gives
$$ {H}^{\infty}(B_{c_0} \times B_{c_0})= H^{\infty}(B_{c_0})  = H^{\infty}(\mathbb{T^{\infty}}) =  H^{\infty}(\mathbb{T^{\infty}\times \mathbb{T^{\infty}}})   .$$
A careful analysis of the involved mappings (i.e., $\Phi$ defined above, its inverse and their analogues for the torus) shows that this chain of isomorphisms identify monomial coefficients on the left  with Fourier coefficients on the right.

The proof of second assertion is analogous (or even simpler)
\end{proof}

Our next objective is to prove that
\begin{equation}\label{eq-mult2var}
\mathcal{M}(\mathcal{H}^{2} (\C^2_{0})) = \mathcal{H}^{\infty}( \C^2_{0}).
\end{equation}
We have already mentioned that $\mathcal{M}(H^2 (\mathbb{T^{\infty}})) = H^{\infty}(\mathbb{T^{\infty}})$. This is also immediately true for $\mathbb{T}^{\infty} \times \mathbb{T}^{\infty}$, that is,
\begin{equation}\label{MultH}
\mathcal{M}(H^2 (\mathbb{T^{\infty}\times T^{\infty}})) = H^{\infty}(\mathbb{T^{\infty} \times \mathbb{T^{\infty}}}).
\end{equation}
Now, the central idea is to combine \eqref{MultH} with Proposition~\ref{TeoHinfC2} (more precisely, with the equality $ \mathcal{H}^{\infty}(\C_{0}^2) = H^{\infty}(\mathbb{T^{\infty}}\times \mathbb{T^{\infty}})$, and the analogous result for the Hilbert space case: $$ \mathcal{H}^2 (\C_{0}^2) = H^{2}(\mathbb{T}^{\infty} \times \mathbb{T}^{\infty}) .$$  A careful analysis of the isomorphisms together with the multiplicativity of Bohr's transform in the multivariate case gives \eqref{eq-mult2var}.

%\subsection{Theorems for multiple Dirichlet series}

Now we are ready to present the analogue to Theorem~\ref{teo2} for double Dirichlet series. With all the results of this section in mind, the proof of Theorem \ref{teo22} follows exactly as in the case of ordinary Dirichlet series, so we omit it.

\begin{theorem}\label{teo22} Let $\varphi \in L^{2}(I^2)$, $D= S\varphi$ and $f=\mathcal{B}^{-1}(D)$. The following statements are equivalent:
	\begin{itemize}
		\item[$i)$] The system $\{\varphi_{m,n}\}_{m,n}$ is a Riesz sequence.
		\item[$ii)$] The function $f$ belongs to $H^{\infty}(\mathbb{T^{\infty}\times \mathbb{T^{\infty}}})$ and there exist $\varepsilon >0$ and a set $B \subseteq \mathbb{T^{\infty}}\times \mathbb{T^{\infty}}$ of total measure such that
		\begin{equation}
		|f(z_1, z_2)| \geq \varepsilon, \qquad \textit{for all } (z_1, z_2) \in B \subset \mathbb{T^{\infty}}\times \mathbb{T^{\infty}}.
		\end{equation}
	\end{itemize}
	In this case,
	\begin{equation}
	\varepsilon \left(\sum |b_n|^2  \right) ^{1/2} \le \norm[\Big]{\sum b_n \varphi_n}_{L^2} \le \norm{D}_{\mathcal{H}^{\infty}} \left(\sum |b_n|^2  \right) ^{1/2},
	\end{equation}
	for every finite sequence of scalars.
	
\end{theorem}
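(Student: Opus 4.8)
The plan is to transcribe the proof of Theorem~\ref{teo2} line by line, replacing each one-variable object by its double counterpart and invoking the multivariate dictionary built up in this section in place of Corollary~\ref{3settings} and Theorem~\ref{Multipliers}. I would begin with the upper (Bessel) bound. By Theorem~\ref{teo1-doble}, the system $\{\varphi_{m,n}\}$ satisfies the right-hand inequality of the displayed estimate if and only if $D=S\varphi$ lies in $\mathcal{M}_2$; and by \eqref{eq-mult2var} together with Proposition~\ref{TeoHinfC2} one has $\mathcal{M}_2=\mathcal{M}(\mathcal{H}^2(\C_0^2))=\mathcal{H}^{\infty}(\C_0^2)=H^{\infty}(\mathbb{T^{\infty}}\times\mathbb{T^{\infty}})$. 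Hence the Bessel condition is exactly $f\in H^{\infty}(\mathbb{T^{\infty}}\times\mathbb{T^{\infty}})$, with best constant $\norm{D}_{\mathcal{H}^{\infty}}$, which already accounts for the right-hand side of the norm estimate. So it remains to deal with the lower bound.

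Next I would reduce the Riesz sequence property to a statement about a single multiplication operator. Exactly as in the scalar case, $\{\varphi_{m,n}\}$ is a Riesz sequence precisely when $T_\varphi$ is bounded and bounded below. Since $T_\varphi=S^{-1}\circ M_{S\varphi}\circ S$ and the multiple Bohr transform identifies $\mathcal{H}^2_2$ isometrically with $H^2(\mathbb{T^{\infty}}\times\mathbb{T^{\infty}})$ (Proposition~\ref{TeoHinfC2}), this is equivalent to $M_f\colon H^2(\mathbb{T^{\infty}}\times\mathbb{T^{\infty}})\to H^2(\mathbb{T^{\infty}}\times\mathbb{T^{\infty}})$ being bounded and bounded below. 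Having secured boundedness from the first paragraph, the task is to characterize when a bounded $M_f$ is bounded below.

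The heart of the argument is the claim that a bounded $M_f$ is bounded below with constant $\varepsilon$ if and only if $|f|\geq\varepsilon$ on a set of total measure. The \emph{if} direction is immediate from $\norm{M_f g}_{L^2}^2=\int|f|^2|g|^2\,dz$. For the converse I would copy the approximation scheme of Theorem~\ref{teo2}: set $N=\{(z_1,z_2):|f(z_1,z_2)|<\varepsilon\}$, approximate $\chi_N$ in $L^2(\mathbb{T^{\infty}}\times\mathbb{T^{\infty}})$ by trigonometric polynomials $P_k$ (the relevant density is the product analogue of \cite[Proposition 5.5]{DirSer}, which transfers along the shuffle isomorphism $H^\infty(\mathbb{T^{\infty}}\times\mathbb{T^{\infty}})=H^\infty(\mathbb{T^{\infty}})$ used in Proposition~\ref{TeoHinfC2}), and multiply each $P_k$ by a sufficiently high power of every variable occurring in both tori so that the result lands in $H^2(\mathbb{T^{\infty}}\times\mathbb{T^{\infty}})$ without altering its $L^2$-norm. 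Applying the lower bound to this modified sequence and letting $k\to\infty$ yields $\varepsilon^2|N|\le\int_N|f|^2\,dz$, which forces $|N|=0$ since $|f|<\varepsilon$ on $N$; thus $B=N^c$ has total measure and satisfies $(ii)$. Taking $\varepsilon$ to be the essential infimum of $|f|$ gives the sharp lower constant in the displayed inequality, completing the estimate. The main obstacle, relative to the one-variable proof, is purely the bookkeeping of which coordinates belong to the first versus the second copy of $\mathbb{T^{\infty}}$ when shifting into the Hardy space; the analytic content is identical to that of Theorem~\ref{teo2}, so no genuinely new idea is needed.
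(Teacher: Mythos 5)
Your proposal is correct and follows exactly the route the paper intends: the paper omits the proof of Theorem~\ref{teo22}, stating that it ``follows exactly as in the case of ordinary Dirichlet series,'' and your line-by-line transcription of the proof of Theorem~\ref{teo2} using Theorem~\ref{teo1-doble}, \eqref{eq-mult2var} and Proposition~\ref{TeoHinfC2} is precisely that argument. You also correctly avoid the Fatou/ergodic machinery, which is only needed for condition $iii)$ of Theorem~\ref{teo2} and is absent from the two-variable statement.
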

In this setting we have not been able to prove an equivalence similar to condition $iii)$ from Theorem \ref{teo2}. The main difficulty arises when we want to define an analogue of $D^{\gamma}$.

\end{document}